\numberwithin{equation}{section} 
\title{Uniqueness and Longtime Behavior of the Completely Positively Correlated Symbiotic Branching Model}
\author{Eran Avneri, Leonid Mytnik}
\date{}
\newtheorem{theorem}{Theorem}[section]
\newtheorem{lemma}[theorem]{Lemma}
\newtheorem{definition}[theorem]{Definition}
\newtheorem{convention}[theorem]{Convention}
\newtheorem{remark}[theorem]{Remark}
\newtheorem{corollary}[theorem]{Corollary}
\newtheorem{proposition}[theorem]{Proposition}
\begin{document}

\maketitle
\begin{abstract}
The symbiotic branching model in $\mathbb{R}$ describes the behavior of two branching populations migrating in space $\mathbb{R}$ in terms of a corresponding system of stochastic partial differential equations. The system is parametrized with a correlation parameter $\rho$, which takes values in $[-1,1]$ and governs the correlation between the branching mechanisms of the two populations. While existence and uniqueness for this system were established for $\rho \in [-1,1)$, weak uniqueness for the completely positively correlated case of $\rho = 1$ has been an open problem. In this paper, we resolve this problem, establishing weak uniqueness for the corresponding system of stochastic partial differential equations. The proof uses a new duality between the symbiotic branching model and the well-known parabolic Anderson model. Furthermore, we use this duality to investigate the long-term behavior of the completely positively correlated symbiotic branching model. We show that, under suitable initial conditions, after a long time, one of the populations dies out. We treat the case of integrable initial conditions and the case of bounded non-integrable initial conditions with well-defined mean. 

\end{abstract}

\section{Introduction}
\subsection{Symbiotic Branching Model}
Consider the following system of stochastic partial differential equations:

\begin{equation}
\label{eq: general}
\begin{aligned}
\begin{cases}
    \frac{\partial U^1_t(x)}{\partial t} &= \frac{1}{2}\Delta U^1_t(x) + \sqrt{U^1_t(x)U_t^2(x)} \dot{W_1}(t,x), \quad x \in \mathbb{R}, t \geq 0,  \\
    \frac{\partial U^2_t(x)}{\partial t} &= \frac{1}{2}\Delta U^2_t(x) + \sqrt{U^1_t(x)U_t^2(x)} \dot{W_2}(t,x), \quad x \in \mathbb{R}, t \geq 0, \\
\end{cases}
\end{aligned}
\end{equation}
where $\dot{W_1}$ and $\dot{W_2}$ are $\rho$-correlated Gaussian white noises on $\mathbb{R}_+ \times \mathbb{R}$ and $\Delta$ is the Laplacian operator.
This system was introduced  by \cite{MR2094150} and is called the \textit{symbiotic branching model} (SBM). Let us mention that this model was studied extensively in the literature, see \cite{MR4331866}, \cite{MR2778802}, \cite{MR3846839}, \cite{MR3474460}, \cite{MR4801607}, \cite{hammer2015infinite}, \cite{MR2814415}, \cite{MR4076768}. It is worth noticing that the SBM is an extension of the so-called mutually catalytic branching model, that also received a lot of attention, see, \cite{MR1653845}, \cite{MR3111225}, \cite{MR1944004}, \cite{MR1921744}, \cite{MR1959845}, \cite{MR1634416}, \cite{MR2663642}, \cite{MR2642883}. It is only natural to ask whether uniqueness in law holds for $(\ref{eq: general})$. It was shown in \cite{MR1653845} that for $\rho =0$ uniqueness in law holds for $(\ref{eq: general})$, for a large family of initial conditions. The proof used a self-duality technique. The self-duality method was later generalized in \cite{MR2094150}, proving uniqueness in law for $\rho \in (-1,1)$. However, this self-duality becomes trivial for $|\rho| = 1$ and can no longer be used to show uniqueness in these cases. Uniqueness in law for the case $\rho = -1$ follows from the particle system moment duality (see Proposition 12 in \cite{MR2094150} for details). However, the moment duality does not give uniqueness in the case of $\rho = 1$. In the case of $\rho = 1$ and for initial conditions $U_0^1=U_0^2$, the SBM coincides with the parabolic Anderson model, for which uniqueness is well-known. For general initial conditions, uniqueness in law for $\rho = 1$ has been a long-standing open problem. For this case, we get the following system of stochastic partial differential equations:

%Thus, uniqueness in law for the case $\rho = 1$ has been an open problem, as it is mentioned in the discussion in Section 3 of \cite{MR4331866}%

\begin{equation}
\label{eq: formal}
\begin{aligned}
\begin{cases}
    \frac{\partial U^1_t(x)}{\partial t} &= \frac{1}{2}\Delta U^1_t(x) + \sqrt{U^1_t(x)U_t^2(x)} \dot{W}(t,x),  \quad x \in \mathbb{R}, t \geq 0,  \\
    \frac{\partial U^2_t(x)}{\partial t} &= \frac{1}{2}\Delta U^2_t(x) + \sqrt{U^1_t(x)U_t^2(x)} \dot{W}(t,x),  \quad x \in \mathbb{R}, t \geq 0, \\
\end{cases}
\end{aligned}
\end{equation}
where $\dot{W}$ is a white noise on $\mathbb{R}_+ \times \mathbb{R}$. In this paper, we are interested in establishing weak uniqueness for $(\ref{eq: formal})$. To be more precise, we will show uniqueness to the martingale problem corresponding to  $(\ref{eq: formal})$. We will use the duality technique to prove uniqueness. As we have mentioned, the self duality introduced in \cite{MR2094150} is not useful in the case of $\rho = 1$, so we introduce a new duality function, with respect to the parabolic Anderson model, which has also been the subject of extensive research, see \cite{MR4700256}, \cite{MR3729613}, \cite{MR3414455},\cite{MR4334682}. This new duality relation, allows us not only to establish uniqueness in law for a wide range of initial conditions, but also enables us to investigate the long-term behavior of the unique solution to $(\ref{eq: formal})$. \\
Note that the SBM arises as the scaling limit of a particle system, see Remark 1 in \cite{MR2094150} for details.  Thus, if $(U^1,U^2)$ is a solution to $(\ref{eq: formal})$, then we may think of $U^i_t(x)$ as the density of population $i$ at time $t$ at the point $x$. Our key question in the long-term analysis is whether both populations will survive forever or at least one population will die out. If there is a positive probability that both populations will survive, we say that coexistence is possible; otherwise, we say coexistence is impossible. The question of the long-term behavior of SBM has been studied in the literature. It was proved in \cite{MR1634416} that for SBM with $\rho = 0$, coexistence is impossible, for rapidly decreasing and flat initial conditions. This non-coexistence result was later proven for a more general class of initial conditions, which are not necessarily flat or integrable, see Theorem 4.2 in \cite{MR1765002}. The non-coexistence result was also extended to all $\rho \in (-1,1)$ (see Proposition 2.1 in \cite{MR2778802}).

In addition, there are known results regarding coexistence for the discrete-space version of SBM, where the state space $\mathbb{R}$ is replaced with $\mathbb{Z}^d$. For $\rho \in (-1,0)$, it has been shown that coexistence is possible if and only if $d \geq 3$ (see Theorem 2.1 in \cite{MR3111225} or Theorem 1.2 in \cite{MR1634416}). For $\rho \in (0,1)$, it has been proven that if $d \leq 2$, then coexistence is impossible (see Proposition 2.1 in \cite{MR2778802}). Furthermore, it has been recently proved that for $\rho = 1$, if $d \leq 2$ then coexistence is impossible, and if $d \geq 3$, coexistence is possible if and only if the branching rate is small enough (for details, see Theorem 1.6 and Theorem 1.7 in \cite{MR4801607}). The proof of the long-term coexistence (or non-coexistence) for $\rho \in (-1,1)$ usually goes through the following procedure. First, coexistence (or non-coexistence) is established for finite mass initial conditions. Then, the \textit{self duality} is used to study the long-term behavior of SBM with \textit{flat} initial conditions. In the absence of useful self-duality, this procedure does not work for obtaining the results for flat initial conditions. For example, the proof of longtime coexistence of SBM in the recurrent regime for $\rho \in (-1,1)$ crucially uses duality to get the result for flat initial conditions. In the case of $\rho = 1$, the proof in \cite{MR4801607} uses a non-trivial stochastic argument to show non-coexistence for finite mass initial conditions, which we do not see how can be adapted to the continuous state space $\mathbb{R}$. Our new duality formula allows us to show non-coexistence for both integrable and some non-integrable non-flat initial conditions. We consider it an important and new tool and believe that it can bring new ideas on how to deal with other important cases, for example, studying coexistence and non-coexistence properties of SBM in the discrete state space for $\rho \in (0,1)$ in dimensions $d \geq 3$.

\subsection{Preliminaries: notation and spaces}
Denote by $\mathscr{B}$ the set of Borel-measurable functions on $\mathbb{R}$. Let $\mathscr{B}_b$ (respectively, $C$, $C_b$) be the set of bounded (respectively, continuous, bounded continuous) Borel measurable functions on $\mathbb{R}$. In general, if $F$ is a set of functions on $\mathbb{R}$, we write $F^+$ or $F_+$ for non-negative functions in $F$. We denote by $\bold{1}$ the constant function on the real line that takes value $1$ for all $x \in \mathbb{R}$. For $f,g\in\mathscr{B}$, let $$\langle f, g \rangle = \int_{\mathbb{R}}f(x)g(x)\mathrm{d}x,$$
whenever $fg$ is integrable or non-negative.  If $\lambda \in \mathbb{R}$ and $f \in C$, we define $\left| f \right|_{\lambda} = \sup_{x \in \mathbb{R}}e^{\lambda |x|}|f(x)|$. If $\lambda = 0$, we use the more common notation $\lVert f \rVert_{\infty} = |f|_0.$  We will also use the following spaces of functions:
$$C_{\lambda} \equiv \left \{f \in C : |f|_{\lambda}  < \infty \right \},$$
$$C_{\text{exp}} \equiv \bigcup_{\lambda > 0} C_{\lambda},$$
$$C_{\text{tem}} \equiv \bigcap_{\lambda < 0} C_{\lambda},$$
$$C_{\text{rap}} \equiv \bigcap_{\lambda > 0} C_{\lambda}.$$
The topology on $C_{\text{tem}}$ (respectively, $C_{\text{rap}}$) is the locally convex topology induced by the norms $\{|f|_{\lambda} : \lambda < 0\}$  (respectively, by the norms $\{|f|_{\lambda} : \lambda > 0\}$). This topology is metrizable, and both spaces are Polish spaces with this topology.  In broad terms, the functions in $C_{\text{rap}}$ exhibit a decay rate faster than any exponential, while those in $C_{\text{tem}}$ are permitted to have at most subexponential growth. We introduce the following space of measures on $\mathbb{R}$:
$$M_{\text{tem}} = \left \{ \mu : \mu \text{ is a measure on } (\mathbb{R}, \mathscr{B}) \text{ such that} \int_{\mathbb{R}} f \mathrm{d}\mu < \infty \quad \forall f \in C_{\text{exp}}^+ \right \}.$$
We can equip $M_{\text{tem}}$ with a metric that makes it a Polish space, such that $\lim_{n \to \infty} \mu_n = \mu$ in $M_{\text{tem}}$ if and only if $\lim_{n \to \infty}\int_{\mathbb{R}} f \mathrm{d}\mu_n =\int_{\mathbb{R}} f \mathrm{d}\mu $ for all $f \in C_{\text{exp}}$. Note that $C_{\text{tem}}^+$ can be embedded in $M_{\text{tem}}$ by viewing $g \in C_{\text{tem}}^+$ as the measure $A \to \int_{A} g(x) \mathrm{d}x$.  For more details, see \cite{MR1634416}.
\\ \\
We will also use the following function space:
$$\mathcal{M} \equiv \left \{f \in C_b : \lim_{L \to \infty} \frac{1}{2L}\int_{-L}^Lf(x)\mathrm{d}x \text{ exists } \right \}.$$
Intuitively, this is the set of all bounded continuous functions with a well-defined mean on $\mathbb{R}$. For $f \in \mathcal{M}$, we denote by $\overline{f}$ its mean, that is,
\begin{equation}
\label{eq:barf}
\overline{f} = \lim_{L \to \infty}\frac{1}{2L}\int_{-L}^L f(x) \mathrm{d}x.
\end{equation}
For a metric space $E$, let $\mathscr{P}(E)$ be the set of Borel probability measures on $E$ and $C_E[0, \infty]$ be the space of continuous $E$-valued functions on $[0, \infty)$ with compact-open topology. \\ For a measure space $(X, \Sigma, \mu)$, denote by $H$ the set of all Borel measurable functions $f: X \to \mathbb{R}$, and define for $p \in [1, \infty)$:
$$L^p(X, \Sigma, \mu) \equiv \left \{f \in H : \int_{X} |f|^p \mathrm{d}\mu < \infty  \right\}.$$
Whenever the measure $\mu$ and the sigma-field $\Sigma$ are clear from context, we simply write $L^p(X)$ for $L^p(X, \Sigma, \mu)$.\\ \\
We also use the following notation regarding the heat equation. Denote the fundamental solution to the heat equation on $\mathbb{R}$ by: $$p_t(x) = \frac{1}{\sqrt{2\pi t}}e^\frac{-x^2}{2t}, \quad x \in \mathbb{R}, t > 0.$$ Let $(S_t)_{t \geq 0}$ be the corresponding heat semigroup, which acts on $f: \mathbb{R} \to \mathbb{R}$ as:

$$S_t(f)(x) = S_tf(x) = \int_{-\infty}^{\infty}p_t(x-y)f(y)\mathrm{d}y.$$ Recall that $S_t(f)$ solves $u_{t} = \frac{1}{2}u_{xx}$ with initial condition $u(0,\cdot) = f(\cdot)$. Among other things, the heat semigroup is helpful for defining a proper notion of a solution to stochastic partial differential equations, such as $(\ref{eq: formal})$. In order to define solutions, we follow the theory of Walsh (see \cite{MR876085}).  
Let $\sigma: \mathbb{R} \to \mathbb{R}$. We say that a stochastic process $(u_t(x))_{t \geq 0, x\in \mathbb{R}}$ is a solution to the stochastic partial differential equation:

\begin{equation}
\notag
    \frac{\partial u_t(x)}{\partial t} = \frac{1}{2}\Delta u_t(x) + \sigma(u_t(x)) \dot{W}(t,x),  \quad x \in \mathbb{R}, t \geq 0,
\end{equation}
if $(u_t(x))_{t \geq 0, x\in \mathbb{R}}$ satisfies the following integral equation:

\begin{equation}
\label{meaning_of_spde}
    u_t(x) = S_t(u_0)(x) + \int_0^t \int_{\mathbb{R}} p_{t-s}(x-y)\sigma(u_s(y)) \dot{W}(\mathrm{d}s, \mathrm{d}y), \quad x \in \mathbb{R}, t \geq 0.
\end{equation}
Here $\dot{W}$ is a white noise on $\mathbb{R}_+ \times \mathbb{R}$ and the double integral in $(\ref{meaning_of_spde})$ is a stochastic integral in the sense of Walsh (see \cite{MR876085}). Note that the above shorthand writing for integral equations as differential ones is extended in the obvious way to systems of equations. 

\section{Main Results}
Let $(\Omega, \mathcal{F}, \left( \mathcal{F}_t \right)_{t \geq 0}, \mathbb{P})$ be a filtered probability space, with $\left( \mathcal{F}_t \right)_{t \geq 0}$ satisfying the usual assumptions. Let $(U^1,U^2)$ be an $\mathcal{F}_t$-adapted, continuous $ C_{\text{tem}}^+\times C_{\text{tem}}^+$-valued solution to~(\ref{eq: formal}). To introduce our uniqueness result, let us note that it is a standard technique to reformulate stochastic partial differential equations in terms of corresponding martingale problems (see, for example, Section 2.1 in \cite{MR1296425}, and Section 2.2 in \cite{MR2094150}). Follow similar steps, we get that  $(U^1,U^2)$ % with $(U^1_0,U^2_0)\in C_{\text{tem}}^+\times C_{\text{tem}}^+$
 satisfies the following martingale problem:
\begin{equation}
\notag
\label{eq:Mart_prob_U}
\begin{aligned} (MP)_U
\begin{cases}
    \text{For each $T>0, \phi\in C_{\text{rap}}$}, i=1,2, \\
    M_t^{\phi, T} = \langle S_{T-t}U^i_t, \phi \rangle - \langle S_TU_0^i, \phi\rangle  \\
    \text{is a continuous square integrable $\mathcal{F}_t$-martingale on $[0,T]$ such that:  $ M_0^{\phi, T}=0$,} \\
    \langle M^{\phi, T} \rangle_{t} = \int_0^t \langle U^1_sU^2_s, \left(S_{T-s}\phi\right)^2 \rangle \mathrm{d}s.\\
\end{cases}
\end{aligned}
\end{equation}
Note that \( M^{\phi,T} \) is independent of \( i \) since both martingales have the same quadratic variation and initial condition. Thus, to show uniqueness in law for~(\ref{eq: formal}), it is enough to show uniqueness for the martingale problem $(MP)_U$.

To prove the uniqueness result, we use a one-to-one linear transformation of $U$. Define
$$X_t = U^1_t + U^2_t; \quad Y_t = U^1_t - U^2_t.$$
It follows from the definition of $(X,Y)$ and from $(\ref{eq: formal})$ that the pair $(X,Y)$ satisfies the following system of equations:
\begin{equation}
\label{eq: X and Y}
\begin{aligned}
\begin{cases}
    \frac{\partial X_t(x)}{\partial t} &= \frac{1}{2}\Delta X_t(x) + \sqrt{X^2_t(x) - Y^2_t(x)} \dot{W}(t,x),  \quad x \in \mathbb{R}, t \geq 0,  \\
    \frac{\partial Y_t(x)}{\partial t} &= \frac{1}{2}\Delta Y_t(x),  \quad x \in \mathbb{R}, t \geq 0. \\
\end{cases}
\end{aligned}
\end{equation} 
Note that $Y_t$ is deterministic given the deterministic initial conditions, hence it is sufficient to show uniqueness in law for $X$. Since $U$ satisfies $(MP)_U$, it is easy to see that $X$ with $X_0 
\in C_{
\text{tem}}^+$, satisfies the following martingale problem:
\begin{equation}
\notag
\label{eq: basic_Mart_prob}
\begin{aligned} (MP)_X
\begin{cases}
    \text{Let $Y_0 \in C_{\text{tem}}$, $\lvert Y_0 (\cdot)\rvert \leq X_0(\cdot)$. For each $T>0, \phi \in C_{\text{rap}}$} \\
    M_t^{\phi, T} = \langle S_{T-t}X_t, \phi \rangle - \langle S_TX_0, \phi \rangle  \\
    \text{is a continuous square integrable $\mathcal{F}_t$-martingale on $[0,T]$ such that:} M_{0}^{\phi, T} = 0, \\ \langle M^{\phi, T}  \rangle_{t} = \int_0^t \langle X_s^2 - Y_s^2, \left(S_{T-s}\phi\right)^2 \rangle \mathrm{d}s, \\
    \text{where } Y_s = S_sY_0, \forall s \geq 0.
\end{cases}
\end{aligned}
\end{equation}
\begin{remark}
    If $Y_0$ is random, $Y_t \in \mathcal{F}_0$ since $Y_t = S_t(Y_0)$ and $Y_0 \in \mathcal{F}_0$.
\end{remark}
\noindent
Our first main result is the duality formula, which establishes a new, previously unknown connection between a solution of the martingale problem $(MP)_X$ and the dual process, the parabolic Anderson model, which is the solution to the following equation:

\begin{equation}
\label{eq: dual}
    \frac{\partial V_t(x)}{\partial t} = \frac{1}{2}\Delta V_t(x) + V_t(x) \dot{W}(t,x),  \quad x \in \mathbb{R}, t \geq 0.
\end{equation}
\noindent
\begin{convention}
    We always choose a solution of $(\ref{eq: dual})$ to be independent of a solution to $(\ref{eq: formal})$.
\end{convention}
\noindent
Existence and pathwise uniqueness of $(\ref{eq: dual})$, with $V_0 = \phi \in C_{\text{tem}}^+$ and with sample paths in $C_{C_{\text{tem}}^+}[0, \infty)$, follows by an application of Theorem 2.2 in  \cite{MR1271224}. Whenever $V_0 = \psi \in C_{\text{rap}}^+$, existence and pathwise uniqueness of a solution with sample paths in $C_{C_\text{rap}^+}[0, \infty)$, follows by an application of Theorem 2.5 in  \cite{MR1271224}. We are now ready to state our duality formula.

\begin{theorem}
\label{prop: duality}Let $\mu \in \mathscr{P}(C_{\text{tem}}^+ \times C_{\text{tem}}^+)$ with compact support. Let $(U^1_t, U^2_t)_{t \geq 0}$ be a solution of the martingale problem $(MP)_U$ with initial distribution $\mu$ and sample paths in $C_{C_{\text{tem}}^+ \times C_{\text{tem}}^+}[0,\infty)$. Let $\phi \in C_{\text{rap}}^+$ and let $(V_t)_{t \geq 0}$ be the unique $C_{\text{rap}}^+$-valued solution of $(\ref{eq: dual})$ with $V_0 = \phi$. Then, for each $T > 0$, the following duality formula holds:
\begin{equation}
\label{duality_formula}
    \mathbb{E}\left[e^{-\langle X_T, \phi \rangle}\right] = \mathbb{E}\left[e^{-\langle X_0, V_T \rangle -\frac{1}{2}\int_0^T \langle V_r^2, \left(Y_{T-r} \right)^2 \rangle \mathrm{d}r}\right],
\end{equation}
where $X_t = U^1_t + U^2_t, Y_t = U^1_t - U^2_t = S_t(U^1_0 - U^2_0).$
\end{theorem}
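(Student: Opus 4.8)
The plan is to realize both sides of $(\ref{duality_formula})$ as the endpoint values of a single function on $[0,T]$ and to show that this function is constant. Since $Y_t=S_t(U_0^1-U_0^2)$ is deterministic and $X_t=U_t^1+U_t^2$ satisfies $(MP)_X$ with $Y_s=S_sY_0$, it suffices to argue at the level of $X$, the parabolic Anderson model $V$ solving $(\ref{eq: dual})$, and the independent white noise $\widetilde W$ driving $V$. For $s\in[0,T]$ put
\[
F(s)=\mathbb{E}\Big[\exp\Big(-\langle X_{T-s},V_s\rangle-\tfrac12\int_0^s\langle V_r^2,(Y_{T-r})^2\rangle\,\mathrm dr\Big)\Big].
\]
Then $F(0)=\mathbb{E}[e^{-\langle X_T,\phi\rangle}]$ and $F(T)=\mathbb{E}[e^{-\langle X_0,V_T\rangle-\frac12\int_0^T\langle V_r^2,(Y_{T-r})^2\rangle\,\mathrm dr}]$, so the theorem is equivalent to $F(0)=F(T)$, which I will obtain by proving that $F$ is constant.

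The choice of $F$ is dictated by a generator comparison on $f(x,v)=e^{-\langle x,v\rangle}$. Applying the generator of $X$ (extracted from $(MP)_X$) in the first variable gives $-\langle x,\tfrac12\Delta v\rangle f+\tfrac12\langle x^2-y^2,v^2\rangle f$, whereas applying the generator of the parabolic Anderson model in the second variable gives $-\langle x,\tfrac12\Delta v\rangle f+\tfrac12\langle x^2,v^2\rangle f$: the two share the Laplacian term and differ only by $-\tfrac12\langle y^2,v^2\rangle f$. Since this discrepancy depends on $(v,y)$ but not on $x$, and $Y$ is deterministic, it is exactly cancelled by the Feynman–Kac correction $-\tfrac12\int_0^s\langle V_r^2,(Y_{T-r})^2\rangle\,\mathrm dr$ built into $F$. (This is the conceptual point: the self-duality of \cite{MR2094150} degenerates when $\rho=1$, but the parabolic Anderson model supplies the ``missing'' noise budget $\tfrac12\langle x^2,v^2\rangle$, and the gap between it and the true branching rate $\tfrac12\langle x^2-y^2,v^2\rangle$ is absorbed by the correction term.)

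To make this rigorous I would not differentiate $F$ directly — neither $s\mapsto X_{T-s}$ nor $s\mapsto V_s$ is differentiable — but discretize: for a partition $0=s_0<\dots<s_n=T$ with mesh $\delta_k=s_{k+1}-s_k$, telescope $F(T)-F(0)=\sum_k(F(s_{k+1})-F(s_k))$. Conditioning on $\sigma(V)$ is legitimate, since $V$ is independent of $(U^1,U^2)$, so the martingale statements of $(MP)_X$ persist; applying $(MP)_X$ on $[T-s_{k+1},T-s_k]$ with the $\mathcal F^V_{s_k}$-measurable test function $V_{s_k}\in C_{\text{rap}}^+$ gives $\langle X_{T-s_k},V_{s_k}\rangle=\langle X_{T-s_{k+1}},S_{\delta_k}V_{s_k}\rangle+M^{(k)}$ with $M^{(k)}$ a conditionally centered martingale increment whose conditional variance $v_k=\int_{T-s_{k+1}}^{T-s_k}\langle X_u^2-Y_u^2,(S_{(T-s_k)-u}V_{s_k})^2\rangle\,\mathrm du$ is $\approx\langle X_{T-s_k}^2-Y_{T-s_k}^2,V_{s_k}^2\rangle\,\delta_k$; dually, the mild form of $(\ref{eq: dual})$ on $[s_k,s_{k+1}]$ gives $\langle X_{T-s_{k+1}},V_{s_{k+1}}\rangle=\langle X_{T-s_{k+1}},S_{\delta_k}V_{s_k}\rangle+N^{(k)}$ with $N^{(k)}=\int_{s_k}^{s_{k+1}}\!\int(S_{s_{k+1}-u}X_{T-s_{k+1}})(y)V_u(y)\,\widetilde W(\mathrm du,\mathrm dy)$ centered (given $\sigma(X)\vee\mathcal F^V_{s_k}$) with conditional variance $w_k\approx\langle X_{T-s_{k+1}}^2,V_{s_k}^2\rangle\,\delta_k$. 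Writing $A_k$ for the exponent defining $F(s_k)$, $D_k=\tfrac12\int_{s_k}^{s_{k+1}}\langle V_r^2,(Y_{T-r})^2\rangle\,\mathrm dr\approx\tfrac12\langle V_{s_k}^2,Y_{T-s_k}^2\rangle\,\delta_k$, and $B_k=\langle X_{T-s_{k+1}},S_{\delta_k}V_{s_k}\rangle+\tfrac12\int_0^{s_k}\langle V_r^2,(Y_{T-r})^2\rangle\,\mathrm dr$ for the part of the exponents already determined at the start of the step, one has $A_k=B_k+M^{(k)}$ and $A_{k+1}=B_k+N^{(k)}+D_k$, hence the exact identity $e^{-A_{k+1}}-e^{-A_k}=e^{-B_k}\big(e^{-N^{(k)}-D_k}-e^{-M^{(k)}}\big)$. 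Taking conditional expectations over the conditionally near-Gaussian increments $M^{(k)}$ and $(N^{(k)},D_k)$ — with $\mathbb{E}[e^{-M^{(k)}}\mid\cdot]=1+\tfrac12 v_k+o(\delta_k)$ and $\mathbb{E}[e^{-N^{(k)}-D_k}\mid\cdot]=1+\tfrac12(w_k-2D_k)+o(\delta_k)$ — reduces the cancellation of the two terms to the pointwise identity $w_k-2D_k\approx v_k$, i.e.\ $\langle x^2,v^2\rangle-\langle y^2,v^2\rangle=\langle x^2-y^2,v^2\rangle$: the symbiotic noise rate equals the parabolic-Anderson noise rate minus twice the correction increment. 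Thus $F(s_{k+1})-F(s_k)=o(\delta_k)$ uniformly, and the mesh tending to zero yields $F(T)=F(0)$. An equivalent and perhaps cleaner route is to mollify both white noises, so $X$ and $V$ become classical solutions, carry out a genuine Itô computation there (where the same cancellation occurs), and pass to the limit.

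The hard part will be turning the ``$\approx$'' and ``$o(\cdot)$'' above into uniform estimates on $[0,T]$, so that the telescoped error really vanishes. All the exponentials are bounded by $1$ (every pairing is non-negative: $X\ge 0$, $V\ge 0$, $Y^2\le X^2$), so the genuine work is $L^1$-control of the rates $\langle X_t^2,V_s^2\rangle$ and $\langle Y_t^2,V_s^2\rangle$ and of the higher-order remainders in the expansions of $e^{-M^{(k)}}$ and $e^{-N^{(k)}-D_k}$ (which needs, besides conditional second moments, fourth-moment bounds on the increments). This is where the hypotheses enter. Compactness of the support of $\mu$ gives, for every $\varepsilon>0$, uniform bounds $|U_0^1|_{-\varepsilon},|U_0^2|_{-\varepsilon}\le C_\varepsilon$; inserting these into the second-moment identity coming from the mild form of $(\ref{eq: X and Y})$ and running a Gronwall estimate (using the one-dimensional bound $\int_0^t\int_{\mathbb R}p_{t-u}(x-y)^2\,\mathrm dy\,\mathrm du<\infty$) yields $\sup_{t\le T}\mathbb{E}[X_t(x)^2]\le C'_\varepsilon e^{\varepsilon|x|}$ and $\sup_{t\le T}|Y_t(x)|\le C''_\varepsilon e^{\varepsilon|x|}$; while $\phi\in C_{\text{rap}}^+$ together with the analogous computation for $(\ref{eq: dual})$ gives $\sup_{s\le T}\mathbb{E}[V_s(x)^2]\le C_\lambda e^{-\lambda|x|}$ for every $\lambda>0$. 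Combining these (with independence and Fubini), $\sup_{s,t\le T}\mathbb{E}[\langle X_t^2,V_s^2\rangle]<\infty$ and $\sup_{s,t\le T}\langle Y_t^2,\mathbb{E}[V_s^2]\rangle<\infty$ (subexponential against rapidly decreasing is integrable), which is exactly what controls the telescoped error. A last, purely technical point is to justify using $(MP)_X$ with the random but $V$-independent test function $V_{s_k}$ on a shifted time interval, which is routine conditioning given the independence in the Convention.
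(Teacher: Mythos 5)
Your proposal is correct in substance and rests on exactly the same idea as the paper's proof: the duality function $e^{-\langle x,v\rangle}$, the observation that the generator of $X$ from $(MP)_X$ and the generator of the parabolic Anderson model differ on this function only by $-\tfrac12\langle y^2,v^2\rangle e^{-\langle x,v\rangle}$, and the Feynman--Kac correction $-\tfrac12\int_0^s\langle V_r^2,(Y_{T-r})^2\rangle\,\mathrm dr$ that absorbs this discrepancy. Where you differ is in the implementation. The paper does not work with the one-parameter function $F(s)$ along the anti-diagonal; it introduces the two-parameter function $f(s,t)=\mathbb{E}[e^{-\langle X_t,S_{T-t-s}V_s\rangle-\frac12\int_0^s\langle V_r^2,Y_{T-r}^2\rangle\mathrm dr}]$ (which satisfies $F(s)=f(s,T-s)$), applies It\^o's formula separately in $t$ (conditioning on $V$, using $V_s$ as a test function for $(MP)_X$) and in $s$ (conditioning on $X$, using $X_t$ as a test function for $(MP)_V$) to obtain absolute continuity of $f$ in each variable with explicit densities $h_1,h_2$, and then invokes Lemma 4.4.10 of \cite{MR838085} to get $f(t,0)-f(0,t)=\int_0^t\bigl(h_1(s,t-s)-h_2(s,t-s)\bigr)\mathrm ds$; the identity $h_1(s,T-s)=h_2(s,T-s)$ is your cancellation, and only a final approximation step is needed to pass from a.e.\ $t$ to $t=T$. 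The advantage of that route over your telescoping scheme is that it only requires integrability of $h_1$ and $h_2$ (second-moment bounds on $\langle X_t^2,V_s^2\rangle$ and $\langle Y_t^2,V_s^2\rangle$, which your compact-support/Gronwall estimates do supply), whereas your direct discretization of the anti-diagonal additionally needs uniform control of the remainders in the expansions of $e^{-M^{(k)}}$ and $e^{-N^{(k)}-D_k}$, hence fourth-moment bounds on the increments and a uniform $o(\delta_k)$ statement; this is exactly the part you flag as ``the hard part'' and leave as a sketch, and it is the part that the Ethier--Kurtz lemma is designed to bypass. Your alternative suggestion of mollifying the noises and passing to the limit is a further departure that the paper does not pursue and that would require its own stability argument.
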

\noindent
With Theorem 2.3 at hand, we immediately have the uniqueness result for $(MP)_U$.

% Note that once we prove Theorem $(\ref{prop: duality})$, uniqueness in law for the martingale problem $(MP)_X$ follows by Proposition 4.4.7 of \cite{MR838085}. Although the boundedness condition required in Proposition 4.4.7 is not met here, it is not necessary and can be lifted without further adjustments to the proof. Therefore, have the following consequence of Theorem $(\ref{prop: duality})$:

\begin{theorem}
\label{main_res: uniqueness}
    Let $\nu \in \mathscr{P}(C_{\text{tem}}^+ \times C_{\text{tem}}^+)$. Then any two solutions of the martingale problem $(MP)_U$, with initial distribution $\nu$ and sample paths in $C_{C_{\text{tem}}^+ \times C_{\text{tem}}^+}[0,\infty)$, have the same finite dimensional distributions, that is, uniqueness in law holds for the martingale problem $(MP)_U$.
\end{theorem}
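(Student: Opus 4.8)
The plan is to obtain Theorem~\ref{main_res: uniqueness} from Theorem~\ref{prop: duality} along the classical route: a duality identity pins down the one-dimensional marginals of the process, and for a time-homogeneous martingale problem, uniqueness of the one-dimensional marginals forces uniqueness of all finite-dimensional marginals.

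I would start with deterministic initial conditions. A point mass $\delta_{(u_1,u_2)}$ with $(u_1,u_2)\in C_{\text{tem}}^+\times C_{\text{tem}}^+$ has compact support, so Theorem~\ref{prop: duality} applies to any solution $(U^1,U^2)$ of $(MP)_U$ started from it. For fixed $T>0$, the right-hand side of \eqref{duality_formula} equals $\mathbb{E}\big[e^{-\langle u_1+u_2,\, V_T\rangle-\frac12\int_0^T\langle V_r^2,\,(S_{T-r}(u_1-u_2))^2\rangle\,\mathrm{d}r}\big]$, the expectation being over the parabolic Anderson model $V$ solving \eqref{eq: dual} with $V_0=\phi$; in particular this number depends only on $(u_1,u_2)$, on $\phi$, and on the (unique) law of $V$, but not on the solution $(U^1,U^2)$. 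Hence $\phi\mapsto\mathbb{E}[e^{-\langle X_T,\phi\rangle}]$ is the same for any two solutions started from $(u_1,u_2)$. Since $C_{\text{rap}}^+$ is a determining class for probability laws on $M_{\text{tem}}$, the law of $X_T$ is thereby uniquely determined; and because $Y_T=S_T(u_1-u_2)$ is a fixed function and $(U^1_T,U^2_T)=\big(\tfrac12(X_T+Y_T),\tfrac12(X_T-Y_T)\big)$, the law of $(U^1_T,U^2_T)$ at each fixed $T$ is uniquely determined as well.

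Next I would remove the restriction to deterministic starts and bootstrap in time. For an arbitrary $\nu\in\mathscr{P}(C_{\text{tem}}^+\times C_{\text{tem}}^+)$ and a solution $(U^1,U^2)$ with initial law $\nu$, the regular conditional distribution given $\mathcal{F}_0$ is, for $\mathbb{P}$-a.e.\ $\omega$, again a solution of $(MP)_U$ (with respect to the same filtration) with the now constant initial value $(U^1_0(\omega),U^2_0(\omega))$; by the previous paragraph its time-$T$ marginal is a fixed function of that value, so integrating over $\nu$ shows the law of $(U^1_T,U^2_T)$ is the same for all solutions with initial law $\nu$. Thus the one-dimensional distributions of $(MP)_U$ are unique for every initial law. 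Because $(MP)_U$ is time-homogeneous on the Polish space $C_{\text{tem}}^+\times C_{\text{tem}}^+$, the standard martingale-problem principle that one-dimensional uniqueness implies finite-dimensional uniqueness (equivalently, that every solution is then a Markov process whose transition function is determined by its one-time marginals; see, e.g., \cite{MR1296425}) yields Theorem~\ref{main_res: uniqueness}. Concretely, one conditions successively on $\mathcal{F}_{t_1}\subseteq\cdots\subseteq\mathcal{F}_{t_{n-1}}$, using at each stage that the conditional law of the shifted process after time $t_k$ is a solution of $(MP)_U$ restarted at $(U^1_{t_k},U^2_{t_k})$, together with the one-time uniqueness just established.

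I do not expect a serious analytic difficulty once Theorem~\ref{prop: duality} is available; the points needing care are structural. First, one must justify that knowing $\mathbb{E}[e^{-\langle X_T,\phi\rangle}]$ for all $\phi\in C_{\text{rap}}^+$ really determines the law of $X_T$ as an $M_{\text{tem}}$-valued random element, which is a standard Laplace-transform argument for random measures. Second --- and this is the main chore --- one needs the measurable-selection and Markov-property inputs of the previous paragraph: that conditioning a solution of $(MP)_U$ on $\mathcal{F}_{t_k}$ returns a solution restarted at the current state, and that the map from a starting point to the now-unique law of the solution is Borel measurable, so that the mixture representation $\mathbb{P}=\int Q_{(u_1,u_2)}\,\nu(\mathrm{d}(u_1,u_2))$ is legitimate. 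Both are routine facts from the general theory of martingale problems on Polish spaces; the only genuinely new ingredient is the duality formula \eqref{duality_formula}.
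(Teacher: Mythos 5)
Your proposal is correct and follows essentially the same route as the paper: the authors' proof is a one-line appeal to Proposition 4.4.7 of Ethier--Kurtz (duality determines the one-dimensional marginals, which for a time-homogeneous martingale problem determines all finite-dimensional distributions), and your argument is precisely an unpacking of that citation, including the reduction to deterministic initial conditions to satisfy the compact-support hypothesis of Theorem~\ref{prop: duality} and the conditioning/restart step. No gaps of substance.
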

{\it Proof of Theorem~\ref{main_res: uniqueness}}:
By Theorem $\ref{prop: duality}$ and one-to-one correspondence between $(X,Y)$ and $(U^1,U^2)$,  uniqueness in law for the martingale problem  $(MP)_U$ follows easily by Proposition 4.4.7 of \cite{MR838085}. Although the boundedness condition required in Proposition 4.4.7 of \cite{MR838085} is not met here, it is not necessary and can be lifted without further adjustments to the proof. 
\qed
\begin{remark}
It follows from Theorem $\ref{main_res: uniqueness}$ and the existence result in Theorem 4 in $\cite{MR2094150}$ that
for any $(\phi, \psi) \in C_{tem}^+ \times C_{tem}^+$, 
the system $(\ref{eq: formal})$ with $(U^1_0, U^2_0) = (\phi, \psi)$ is well-posed.
\end{remark}
\noindent
Theorem $\ref{main_res: uniqueness}$ states that solutions are unique in law, so we may investigate the longtime behavior of the unique solution. The main question in our longtime behavior analysis, is whether both populations can survive forever or whether at least one population will die out. In other words, we want to check if coexistence of the populations is possible. To formally define coexistence in the case of integrable initial conditions, we need the following lemma.

\begin{lemma}
\label{sol_is_mart}
Let $\phi, \psi \in L^1(\mathbb{R}) \cap C_{\text{tem}}^+$. Let $(U^1,U^2)$ be the solution of $(\ref{eq: formal})$ with initial conditions $(U^1_0, U^2_0) = (\phi, \psi)$. Then, for $i \in \{1,2 \}$, the process $\left( \left \langle U_t^i, \bold{1} \right \rangle \right)_{t \geq 0}$ is a non-negative square integrable martingale.
\end{lemma}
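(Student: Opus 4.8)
The plan is to show that $\langle U^i_t, \mathbf{1}\rangle$ is well-defined, integrable, and a martingale by working through the mild-form / martingale-problem representation and exploiting the integrability of the initial data. First I would observe that by Lemma~\ref{sol_is_mart}'s hypotheses $\phi,\psi \in L^1(\mathbb{R})\cap C_{\text{tem}}^+$, so $\langle U_0^i,\mathbf{1}\rangle = \langle \phi \text{ or } \psi, \mathbf{1}\rangle < \infty$. To make sense of $\langle U_t^i,\mathbf{1}\rangle$ for $t>0$, I would either approximate $\mathbf{1}$ by functions in $C_{\text{rap}}$ or use the fact that the total mass of the heat semigroup is conserved, i.e.\ $\langle S_t f, \mathbf{1}\rangle = \langle f, \mathbf{1}\rangle$ for $f \in L^1$. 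Concretely, I would take a sequence $\phi_n \in C_{\text{rap}}^+$ with $\phi_n \uparrow \mathbf{1}$ (e.g.\ $\phi_n(x) = e^{-|x|/n}$ or a smoothed cutoff), apply the martingale problem $(MP)_U$ with test function $\phi_n$ and horizon $T$, and pass to the limit in $n$ and then in $T\downarrow t$.

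The key computation is with the second moment. From $(MP)_U$, $M_t^{\phi_n,T} = \langle S_{T-t}U^i_t,\phi_n\rangle - \langle S_T U_0^i,\phi_n\rangle$ is a square-integrable martingale with $\langle M^{\phi_n,T}\rangle_t = \int_0^t \langle U^1_s U^2_s, (S_{T-s}\phi_n)^2\rangle\,\mathrm{d}s$. Setting $T = t$ gives $\langle U^i_t,\phi_n\rangle = \langle S_t U_0^i,\phi_n\rangle + M_t^{\phi_n,t}$, so
\begin{equation}
\notag
\mathbb{E}\big[\langle U^i_t,\phi_n\rangle^2\big] \leq 2\langle S_t U_0^i,\phi_n\rangle^2 + 2\int_0^t \mathbb{E}\big[\langle U^1_s U^2_s,(S_{t-s}\phi_n)^2\rangle\big]\,\mathrm{d}s.
\end{equation}
The first term is bounded by $\langle U_0^i,\mathbf{1}\rangle^2$ uniformly in $n$. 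For the second term I would use $U^1_s U^2_s \leq \tfrac14 (U^1_s + U^2_s)^2 = \tfrac14 X_s^2$ and $(S_{t-s}\phi_n)^2 \leq S_{t-s}\phi_n \cdot \|S_{t-s}\phi_n\|_\infty \leq S_{t-s}\phi_n$ (since $0\le\phi_n\le 1$), so the integrand is bounded by $\tfrac14\langle X_s^2, S_{t-s}\phi_n\rangle$; then I need a uniform-in-$n$ bound on $\sup_{s\le t}\mathbb{E}\langle X_s^2, S_{t-s}\phi_n\rangle$. This I would get from known moment estimates for $C_{\text{tem}}$-valued solutions together with integrability of $X_0 = \phi + \psi$ — essentially $\mathbb{E}\langle X_s^2, g\rangle$ can be controlled via the moment formula for the SBM (the second moment of $X$ solves a closed linear equation driven by the heat flow of $X_0$ and $Y_0$), and integrability of the initial data keeps this finite and uniformly bounded. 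Given such a uniform $L^2$ bound, monotone convergence gives $\langle U^i_t,\mathbf{1}\rangle = \lim_n \langle U^i_t,\phi_n\rangle$ a.s.\ and in $L^2$, hence $\langle U^i_t,\mathbf{1}\rangle \in L^2$ with the same bound.

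Finally, for the martingale property: for $\phi_n \in C_{\text{rap}}^+$ and $T > t$, $M_t^{\phi_n,T}$ is a martingale, so $\mathbb{E}[\langle S_{T-t}U_t^i,\phi_n\rangle \mid \mathcal{F}_r] = \langle S_{T-r}U_r^i,\phi_n\rangle$ for $r\le t$. Letting $T\downarrow t$ (using continuity of paths in $C_{\text{tem}}^+$ and the $L^2$ bounds to justify the limit of conditional expectations) gives $\mathbb{E}[\langle U_t^i,\phi_n\rangle \mid \mathcal{F}_r] = \langle U_r^i, S_{t-r}\phi_n\rangle$; note $\langle U_r^i, S_{t-r}\phi_n\rangle \uparrow \langle U_r^i, S_{t-r}\mathbf{1}\rangle = \langle U_r^i,\mathbf{1}\rangle$ as $n\to\infty$. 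Then letting $n\to\infty$ and applying conditional monotone convergence (legitimate because $\langle U_t^i,\phi_n\rangle \geq 0$ increases to $\langle U_t^i,\mathbf{1}\rangle$ which is integrable) yields $\mathbb{E}[\langle U_t^i,\mathbf{1}\rangle \mid \mathcal{F}_r] = \langle U_r^i,\mathbf{1}\rangle$, i.e.\ the process is a martingale; non-negativity is immediate since $U^i \geq 0$.

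The main obstacle I anticipate is the uniform-in-$n$ second-moment bound: I need to control $\mathbb{E}\langle X_s^2, S_{t-s}\phi_n\rangle$ uniformly, which requires a second-moment formula for $X$ (and the fact that $Y^2 \geq 0$ only \emph{helps}, since $X_s^2 - Y_s^2 \le X_s^2$), and integrability of the initial data to ensure the resulting heat-flow integrals converge rather than blow up. Interchanging the limits $T\downarrow t$ and $n\to\infty$ cleanly — justifying that $\langle S_{T-t}U_t^i,\phi_n\rangle \to \langle U_t^i,\mathbf{1}\rangle$ in $L^1$ jointly — is the other delicate point, handled by the uniform $L^2$ bounds plus dominated/monotone convergence.
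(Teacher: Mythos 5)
Your proof is essentially correct, but it takes a genuinely different route from the paper. The paper's own proof is a one-line appeal to the mild formulation: write $U^i_t(x)=S_tU^i_0(x)+\int_0^t\int_{\mathbb{R}}p_{t-s}(x-y)\sqrt{U^1_s U^2_s}(y)\,\dot W(\mathrm{d}s,\mathrm{d}y)$, integrate over $x$, and use the stochastic Fubini theorem together with $\int_{\mathbb{R}}p_{t-s}(x-y)\,\mathrm{d}x=1$ to get $\langle U^i_t,\mathbf{1}\rangle=\langle U^i_0,\mathbf{1}\rangle+\int_0^t\int_{\mathbb{R}}\sqrt{U^1_sU^2_s}(y)\,\dot W(\mathrm{d}s,\mathrm{d}y)$; the right-hand stochastic integral is then a square-integrable martingale by Theorem 2.5 in \cite{MR876085} once one checks $\mathbb{E}\int_0^t\langle U^1_sU^2_s,\mathbf{1}\rangle\,\mathrm{d}s<\infty$. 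You instead stay entirely inside the martingale-problem formulation $(MP)_U$, approximate $\mathbf{1}$ by test functions in $C_{\text{rap}}^+$, and pass to the limit using a uniform $L^2$ bound plus (conditional) monotone convergence. Both arguments hinge on exactly the same moment estimate (your bound $U^1_sU^2_s\le\tfrac14 X_s^2$ and the closed linear equation for the second moment is the ``simple moment estimation'' the paper alludes to, and a Gronwall-type argument with the singular kernel $\int_{\mathbb{R}}p_{t-s}(x-y)^2\,\mathrm{d}x=(2\sqrt{\pi(t-s)})^{-1}$ closes it since $X_0\in L^1\cap L^\infty$). Your version is longer but arguably better adapted to the paper's setting, since it never invokes the mild form and so applies verbatim to any solution of the martingale problem. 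Two small repairs: your example $\phi_n(x)=e^{-|x|/n}$ is \emph{not} in $C_{\text{rap}}$ (it decays only exponentially, and $C_{\text{rap}}$ requires super-exponential decay), so you must use the smoothed compactly supported cutoff you mention as the alternative; and in the final step it is cleaner to fix $T$, note that $\langle U^i_t,S_{T-t}\phi_n\rangle$ is a martingale on $[0,T]$ for each $n$ and that $S_{T-t}\phi_n\uparrow\mathbf{1}$, and pass to the limit using the uniform $L^2$ (hence uniform integrability) bound, which avoids the somewhat delicate interchange of $T\downarrow t$ with the conditional expectation.
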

\begin{proof}
    Applying the stochastic Fubini theorem, along with some simple moment estimations, we get the desired result, by Theorem 2.5 in \cite{MR876085}. We leave the details to the reader.
\end{proof}
\noindent
By the above lemma, and the martingale convergence theorem, there exist almost sure limits:
$$\lim_{t \to \infty} \left \langle U_t^1, \bold{1} \right \rangle, \quad \lim_{t \to \infty}\left \langle U_t^2, \bold{1} \right \rangle.$$
\noindent
Thus, the following definition is in place.

\begin{definition}
\label{def: integrable_coexist}
Let $\phi, \psi \in L^1(\mathbb{R}) \cap C_{\text{tem}}^+$. Let $(U^1,U^2)$ be the solution of $(\ref{eq: formal})$ with initial conditions $(U^1_0, U^2_0) = (\phi, \psi)$. We say that global coexistence is possible if $\mathbb{P}\left(\lim_{t \to \infty} \langle U^1_t, \bold{1} \rangle \langle U^2_t, \bold{1} \rangle > 0 \right) > 0$. Otherwise, we say that global coexistence is impossible.
\end{definition} 
\noindent
We are interested to define coexistence in the case of not necessarily integrable initial conditions. To characterize longtime behavior, the previous martingale convergence argument fails, hence we use limit points in $M_{\text{tem}}$. To show existence of limit points, we need to show the relevant set of measures is tight. This is done in the next lemma.

\begin{lemma}
\label{local_coexistence_is_ok}
Let $\phi, \psi \in \mathcal{M} \cap C_{\text{tem}}^+$. Let $(U^1,U^2)$ be the solution of $(\ref{eq: formal})$ with initial conditions $(U^1_0, U^2_0) = (\phi, \psi)$. Then, for $i \in \{1,2\}$ the collection $\{U^i_t\}_{t \geq 0}$, is tight in $M_{\text{tem}}$. 
\end{lemma}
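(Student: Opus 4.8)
The plan is to control, uniformly in $t$, the mass of $U^i_t$ against the exponential weights $e_\lambda(x):=e^{-\lambda|x|}$ ($\lambda>0$), and then apply the compactness criterion in $M_{\text{tem}}$: a subset $A\subseteq M_{\text{tem}}$ is relatively compact if and only if $\sup_{\mu\in A}\langle\mu,e_\lambda\rangle<\infty$ for every $\lambda>0$ (the measure-valued counterpart of the compactness criterion for $C_{\text{tem}}$; see \cite{MR1634416}). Granting this, tightness of $\{U^i_t\}_{t\ge0}$ follows once we show
\[
\sup_{t\ge0}\,\mathbb{E}\big[\langle U^i_t,e_\lambda\rangle\big]<\infty\qquad\text{for every }\lambda>0 ,
\]
because then, given $\varepsilon>0$, Markov's inequality supplies constants $K_k$ with $\sup_t\mathbb{P}\big(\langle U^i_t,e_{1/k}\rangle>K_k\big)<\varepsilon 2^{-k}$, and the set $A_\varepsilon:=\{\mu\in M_{\text{tem}}:\langle\mu,e_{1/k}\rangle\le K_k\ \text{for all }k\ge1\}$ is closed (each map $\mu\mapsto\langle\mu,e_{1/k}\rangle$ is continuous on $M_{\text{tem}}$, since $e_{1/k}\in C_{\text{exp}}$) and relatively compact (as $e_\lambda\le e_{1/k}$ whenever $1/k\le\lambda$), hence compact, with $\sup_t\mathbb{P}(U^i_t\notin A_\varepsilon)\le\varepsilon$.

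To prove the moment bound I would use $(MP)_U$, which $(U^1,U^2)$ satisfies by the construction in Section 2. Since $e_\lambda\notin C_{\text{rap}}$, first pick compactly supported functions $\chi_n\in C_{\text{rap}}^+$ with $\chi_n\uparrow e_\lambda$ pointwise (for instance $\chi_n=e_\lambda g_n$ with $g_n$ continuous, $0\le g_n\le g_{n+1}\le1$, $g_n\equiv1$ on $[-n,n]$ and $g_n\equiv0$ off $[-(n+1),n+1]$). Applying $(MP)_U$ with test function $\chi_n$ and horizon $T=t$, and using $M_0^{\chi_n,t}=0$ together with the martingale property at time $t$, gives $\mathbb{E}[\langle U^i_t,\chi_n\rangle]=\langle S_tU^i_0,\chi_n\rangle=\langle U^i_0,S_t\chi_n\rangle$, where the last equality is self-adjointness of the heat semigroup. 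Letting $n\to\infty$, monotone convergence applies on both sides ($U^i_t\ge0$, $U^i_0\ge0$, $p_t\ge0$ so $S_t\chi_n\uparrow S_te_\lambda$ pointwise), whence $\mathbb{E}[\langle U^i_t,e_\lambda\rangle]=\langle U^i_0,S_te_\lambda\rangle$. Since $\phi,\psi\in\mathcal{M}\subseteq C_b$, $\lVert U^i_0\rVert_\infty<\infty$, and because $S_t$ preserves the integral of a non-negative integrable function,
\[
\langle U^i_0,S_te_\lambda\rangle\le\lVert U^i_0\rVert_\infty\int_{\mathbb{R}}S_te_\lambda(y)\,\mathrm{d}y=\lVert U^i_0\rVert_\infty\int_{\mathbb{R}}e^{-\lambda|y|}\,\mathrm{d}y=\frac{2}{\lambda}\lVert U^i_0\rVert_\infty ,
\]
uniformly in $t\ge0$. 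This establishes the moment bound and, with the first paragraph, completes the proof.

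The routine parts above are the martingale-problem computation and the heat-kernel estimates; the main point requiring care is the compactness criterion in $M_{\text{tem}}$ — specifically, verifying that the limit produced by Prokhorov's theorem is genuinely an element of $M_{\text{tem}}$ and not merely a locally finite measure. If the criterion is available in the stated form it may simply be cited from \cite{MR1634416}; otherwise one argues as follows: the bound $\sup_{\mu\in A}\langle\mu,e_\lambda\rangle<\infty$ makes each $e_\lambda\mu$ a finite measure and, via the bound for $e_{\lambda/2}$, a tight family, so Prokhorov gives weak limit points; a diagonal extraction over $\lambda=1/k$ produces a consistent family of weak limits, and dividing by $e_{1/k}$ exhibits the reconstructed measure as dominated, after multiplication by any $e_\lambda$, by a finite measure, hence as an element of $M_{\text{tem}}$. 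A secondary technicality is the monotone approximation of $e_\lambda$ by $C_{\text{rap}}$ functions, forced by the fact that $(MP)_U$ is stated only for $C_{\text{rap}}$ test functions.
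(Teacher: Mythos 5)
Your proof is correct and follows essentially the same route as the paper's: a uniform-in-$t$ first-moment bound obtained from the martingale problem (the mean of $\langle U^i_t,\cdot\rangle$ is the heat flow of the bounded initial datum, hence controlled by $\lVert X_0\rVert_\infty$ times the integral of the test function), followed by Markov's inequality and a compactness criterion for the space of measures. The only difference is cosmetic: the paper tests against indicators of bounded sets and cites a general tightness criterion for random measures, whereas you test against the exponential weights $e^{-\lambda|x|}$ and verify the $M_{\text{tem}}$-compactness criterion directly, which is if anything a more self-contained treatment of the $M_{\text{tem}}$ topology.
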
 
% \begin{proof}
% Let $B \in \mathscr{B}_b$, and let $\varphi \in C_{\text{rap}}^+$ be such that $\varphi \geq 1$ on $B$. Thus, $\mathbb{E}\left[\mu_t^i(A)\right] \leq \mathbb{E}\left[ \langle X_t, \varphi \rangle \right] =  \langle X_0, \varphi \rangle < \infty.$
% By the Markov inequality and Theorem 4.10 in \cite{MR3642325}, we are done. 
% \end{proof}
\noindent
We postpone the proof of Lemma $\ref{local_coexistence_is_ok}$ to Section 3.2. We are now ready to introduce the concept of local coexistence:
\begin{definition}
\label{def: mean_coexist}
Let $\phi,\psi \in C_{\text{tem}}^+ \cap \mathcal{M}$ and let $(U^1,U^2)$ be the solution of $(\ref{eq: formal})$ with initial conditions $(U^1_0, U^2_0) = (\phi, \psi)$. We say that local coexistence is possible if for any limit point $(U^1_{\infty}, U^2_{\infty}) \in M_{\text{tem}}^2$, and for all $g_1, g_2 \in C_{\text{exp}}$, $\mathbb{P}\left(\langle U^1_\infty, g_1 \rangle \cdot \langle U^2_\infty, g_2 \rangle > 0 \right) > 0$. Otherwise, we say that local coexistence is impossible.
\end{definition}
\noindent
With these definitions in hand, we are ready to present our main results regrading the longtime behavior of the completely positively correlated SBM.
\begin{theorem}
\label{main_res: global_longtime}
    Let $(\phi, \psi) \in C_{\text{tem}}^+ \cap L^1$. Assume that $(U^1,U^2)$ is the solution of $(\ref{eq: formal})$ with initial conditions $(U^1_0, U^2_0) = (\phi, \psi)$. Assume without loss of generality that $\langle \phi, \bold{1} \rangle \geq \langle \psi, \bold{1} \rangle$. Then, $\langle U_t^2, \bold{1} \rangle \xrightarrow[t \to \infty]{\text{a.s.}} 0 $ and $\langle U_t^1, \bold{1} \rangle \xrightarrow[t \to \infty]{\text{a.s.}} \langle \phi, \bold{1} \rangle - \langle \psi, \bold{1} \rangle$. That is, global coexistence is impossible.
\end{theorem}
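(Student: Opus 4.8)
The plan is to extract from the duality formula of Theorem~\ref{prop: duality} an explicit expression for the Laplace transform of $\langle U_T^2, \mathbf{1} \rangle$ (or, equivalently, of $\langle X_T - Y_T, \mathbf{1} \rangle / 2$), and then pass to the limit $T \to \infty$. Recall $X_t = U_t^1 + U_t^2$ and $Y_t = U_t^1 - U_t^2 = S_t(\phi - \psi)$, so $\langle U_t^2, \mathbf{1} \rangle = \frac12 (\langle X_t, \mathbf{1}\rangle - \langle Y_t, \mathbf{1}\rangle)$, and since the heat semigroup preserves total mass, $\langle Y_t, \mathbf{1} \rangle = \langle \phi - \psi, \mathbf{1} \rangle \geq 0$ for all $t$ by the normalization assumption. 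The duality formula is stated for test functions $\phi \in C_{\text{rap}}^+$, not for $\mathbf{1}$, so the first step is to apply it with $\phi$ replaced by $\theta \chi$ where $\theta > 0$ and $\chi \in C_{\text{rap}}^+$ is a fixed approximation of $\mathbf{1}$ (e.g.\ $\chi_n \uparrow \mathbf{1}$), obtaining
\begin{equation}
\notag
\mathbb{E}\left[ e^{-\theta \langle X_T, \chi_n \rangle} \right] = \mathbb{E}\left[ e^{-\theta \langle X_0, V_T^{(n)} \rangle - \frac12 \int_0^T \langle (V_r^{(n)})^2, (Y_{T-r})^2 \rangle \, \mathrm{d}r} \right],
\end{equation}
where $V^{(n)}$ solves \eqref{eq: dual} with $V_0^{(n)} = \theta \chi_n$. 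By the martingale property of Lemma~\ref{sol_is_mart} and dominated/monotone convergence we may then send $n \to \infty$ to replace $\chi_n$ by $\mathbf{1}$ on the left, and on the right we must control $V_T^{(n)}$ and its contribution to the exponential integral in the limit.

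The heart of the argument is the long-time behavior of the dual parabolic Anderson model started from (an approximation of) the constant $\theta \mathbf{1}$. The key fact is that $\langle V_r, \mathbf{1} \rangle$, or rather $V_r$ integrated against a test function, is itself governed by a martingale structure, and that $(V_t)$ started from a constant is, in an appropriate sense, the PAM with flat initial data, whose total mass is a nonnegative martingale; moreover one expects $\int_0^\infty \langle V_r^2, (Y_{T-r})^2 \rangle \, \mathrm{d}r$ to carry the relevant information. The cleanest route is: first establish the identity for fixed $T$ with $\chi = \mathbf{1}$, namely
\begin{equation}
\notag
\mathbb{E}\left[ e^{-\theta \langle X_T, \mathbf{1} \rangle} \right] = \mathbb{E}\left[ e^{-\theta \langle \phi + \psi, \bar{V}_T \rangle - \frac12 \int_0^T \langle \bar{V}_r^2, (Y_{T-r})^2 \rangle \, \mathrm{d}r} \right],
\end{equation}
where $\bar V$ is the PAM from $\theta \mathbf{1}$; then use that as $T\to\infty$ the PAM from a constant, integrated against the fixed integrable function $\phi+\psi$, converges (this is where the recurrence of one-dimensional Brownian motion enters — the PAM in $d=1$ with flat initial condition does not die out locally but its spatial correlations decay, so $\langle \phi+\psi, \bar V_T\rangle$ should converge to $\theta \bar V_\infty \langle \phi + \psi, \mathbf 1\rangle$ for a suitable limiting random variable, or more simply its expectation is constant $= \theta \langle \phi + \psi, \mathbf 1 \rangle$ by the martingale property of the PAM mass density). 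Combining this with the fact that $\langle X_T, \mathbf 1 \rangle = \langle U_T^1, \mathbf 1\rangle + \langle U_T^2, \mathbf 1 \rangle$ converges a.s.\ (Lemma~\ref{sol_is_mart}) and that $\langle U_T^1, \mathbf 1 \rangle - \langle U_T^2, \mathbf 1\rangle = \langle \phi - \psi, \mathbf 1\rangle$ is constant, we get that $\langle U_T^2, \mathbf 1\rangle \to W$ a.s.\ for some $W \geq 0$ with $\langle U_T^1, \mathbf 1 \rangle \to W + \langle \phi - \psi, \mathbf 1\rangle$, and it remains to show $W = 0$ a.s. For this I would take $\theta \to \infty$ in the limiting identity: the left side becomes $\mathbb{P}(\langle X_\infty, \mathbf 1\rangle = 0) + \mathbb{E}[e^{-\theta\langle X_\infty,\mathbf 1\rangle}\mathbf 1_{\{\langle X_\infty,\mathbf 1\rangle>0\}}]$, while on the right the term $\frac12\int_0^T\langle \bar V_r^2,(Y_{T-r})^2\rangle\mathrm dr$, being of order $\theta^2$, forces the right side to concentrate; tracking this shows $2W = \langle X_\infty, \mathbf 1\rangle - \langle \phi-\psi,\mathbf 1\rangle$ must be zero, i.e.\ $\langle X_\infty, \mathbf 1 \rangle = \langle \phi - \psi, \mathbf 1\rangle$.

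The main obstacle I anticipate is twofold. First, the duality formula is only available for $C_{\text{rap}}^+$ test functions with compactly supported initial law, so justifying the passage $\chi_n \to \mathbf 1$ and simultaneously controlling the dual term $\int_0^T \langle V_r^2, (Y_{T-r})^2\rangle\,\mathrm dr$ — whose integrand involves $(Y_{T-r})^2 = (S_{T-r}(\phi-\psi))^2$, which decays like $1/(T-r)$ uniformly but is not integrable against a flat $V$ unless one exploits that $\phi - \psi \in L^1$ so $S_{T-r}(\phi-\psi)(x) \lesssim (T-r)^{-1/2}$ — requires careful uniform-integrability estimates, presumably via second-moment bounds on the PAM (which are finite in $d=1$ for flat initial data). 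Second, extracting $W = 0$ rigorously from the $\theta\to\infty$ asymptotics demands a quantitative lower bound on $\int_0^T\langle \bar V_r^2,(Y_{T-r})^2\rangle\mathrm dr$ that survives the limit; the natural estimate uses $\mathbb{E}[\bar V_r(x)\bar V_r(y)] \geq \theta^2$ (by the martingale/Jensen property, $\mathbb E[\bar V_r(x)] = \theta$ and the correlations are nonnegative), giving $\mathbb E\left[\int_0^T \langle \bar V_r^2,(Y_{T-r})^2\rangle\mathrm dr\right] \geq \theta^2 \int_0^T \langle \mathbf 1,(Y_{s})^2\rangle\mathrm ds = \theta^2\int_0^T\|S_s(\phi-\psi)\|_2^2\,\mathrm ds$, and the point is that $\int_0^\infty \|S_s h\|_2^2\,\mathrm ds = \infty$ in $d = 1$ whenever $\langle h, \mathbf 1\rangle \neq 0$ — this divergence (the one-dimensional recurrence input) is exactly what kills the coexistence. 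So the proof structure is: (i) upgrade the duality identity to $\chi = \mathbf 1$; (ii) identify the a.s.\ limits of the masses via Lemma~\ref{sol_is_mart}; (iii) feed the divergence $\int_0^\infty\|S_s(\phi-\psi)\|_2^2\,\mathrm ds = \infty$ into the duality identity as $T\to\infty$ to conclude $\langle U_t^2,\mathbf 1\rangle \to 0$ and hence $\langle U_t^1,\mathbf 1\rangle \to \langle\phi,\mathbf 1\rangle - \langle\psi,\mathbf 1\rangle$.
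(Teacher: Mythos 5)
Your reduction at the end is the right one and matches the paper's endgame: by Lemma~\ref{sol_is_mart} the mass $\langle X_T,\mathbf{1}\rangle$ converges a.s., $\langle Y_T,\mathbf{1}\rangle=\langle Y_0,\mathbf{1}\rangle$ is constant, so it suffices to identify the limit of $\langle X_T,\mathbf{1}\rangle$ as $\langle Y_0,\mathbf{1}\rangle$, which the paper also does by showing $\mathbb{E}[e^{-a\langle X_T,\mathbf{1}\rangle}]\to e^{-a\langle Y_0,\mathbf{1}\rangle}$ via the duality with dual initial condition $a\mathbf{1}$. But the mechanism you propose for that identification does not work, in two places. First, your guess that the flat-initial-condition PAM ``does not die out locally'' is the opposite of the truth: the paper's Corollary~\ref{cor: ref I didn't find} (proved by PAM self-duality plus the fact that $\mathbb{E}[\sqrt{\langle V_t,\mathbf{1}\rangle}]\to 0$ for integrable initial data) shows $\langle V_T,g\rangle\to 0$ in probability for every $g\in L^1$, and this local extinction is one of the two essential inputs. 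Second, and more seriously, the proposed driving force --- that $\mathbb{E}\bigl[\int_0^T\langle V_r^2,Y_{T-r}^2\rangle\,\mathrm{d}r\bigr]\gtrsim a^2\int_0^T\lVert S_s(\phi-\psi)\rVert_2^2\,\mathrm{d}s\to\infty$ ``kills coexistence'' --- points in the wrong direction. You need the right-hand side of the duality to stay \emph{large} (close to $e^{-a\langle Y_0,\mathbf{1}\rangle}$), since $\langle X_T,\mathbf{1}\rangle$ must converge to the \emph{smallest} value compatible with $X\ge Y$. If the dual integral diverged in probability, the duality would give $\mathbb{E}[e^{-a\langle X_T,\mathbf{1}\rangle}]\to 0$, i.e. $\langle X_T,\mathbf{1}\rangle\to\infty$, contradicting the martingale property. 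In fact only the \emph{expectation} of that integral diverges (this is PAM intermittency/heavy tails); the integral itself is a.s.\ finite, so first-moment lower bounds on it yield nothing, and the $\theta\to\infty$ limit of the Laplace transform only gives $\mathbb{P}(\langle X_\infty,\mathbf{1}\rangle=0)=0$, not $W=0$.

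The idea you are missing is that the dual-side limit can be computed \emph{exactly} rather than bounded: for any fixed nonnegative $h$, the process $e^{-\langle S_{T-t}V_t,h\rangle-\frac12\int_0^t\langle V_s^2,(S_{T-s}h)^2\rangle\mathrm{d}s}$ is a bounded exponential martingale (Lemma~\ref{lemma: dual_total_mass_mart}), so its expectation at $t=T$ equals $e^{-a\langle\mathbf{1},h\rangle}$ identically in $T$. The paper applies this with $h=\lvert Y_{\hat T}\rvert$, where $\hat T$ is chosen (via Corollary~\ref{cor: negative part vanishes}) so that $\langle Y_{\hat T}^-,\mathbf{1}\rangle<\varepsilon$; since $\langle V_T,\lvert Y_{\hat T}\rvert\rangle$ and $\langle V_T,X_{\hat T}\rangle$ both tend to $0$ in probability and $(S_{T-s}Y_{\hat T})^2\le(S_{T-s}\lvert Y_{\hat T}\rvert)^2$, one gets $\liminf_T\mathbb{E}[e^{-a\langle X_{\hat T+T},\mathbf{1}\rangle}]\ge e^{-a\langle\mathbf{1},\lvert Y_{\hat T}\rvert\rangle}\ge e^{-2a\varepsilon}e^{-a\langle Y_0,\mathbf{1}\rangle}$, which squeezes against the trivial upper bound $e^{-a\langle Y_0,\mathbf{1}\rangle}$ coming from $X\ge Y$. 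Your plan contains neither the exponential-martingale identity nor the $\lvert Y\rvert$-versus-$Y$ comparison after the time shift $\hat T$, and without them the limit of the dual side is not identified; so as written the proof has a genuine gap at its central step.
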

\noindent
\begin{theorem}
\label{main_res: local_longtime}
Let $(\phi, \psi) \in C_{\text{tem}}^+ \cap \mathcal{M}$. Assume that $(U^1,U^2)$ is the solution of $(\ref{eq: formal})$ with initial conditions $(U^1_0, U^2_0) = (\phi, \psi)$. Assume without loss of generality that $\overline{\phi} \geq \overline{\psi}$. Then, $\langle U_t^2, g \rangle \xrightarrow[t \to \infty]{P} 0 $ and $\langle U_t^1, g \rangle \xrightarrow[t \to \infty]{P} \left( \overline{\phi} - \overline{\psi} \right) \langle g, \bold{1} \rangle$ for all $g \in C_{\text{exp}}$. In particular, $U^2_t \xrightarrow[t \to \infty]{P} 0$ in $M_{\text{tem}}$ and $U^1_t \xrightarrow[t \to \infty]{P}  \left(\overline{\phi} - \overline{\psi}\right) \bold{1}$ in $M_{\text{tem}}$. That is, local coexistence is impossible.
\end{theorem}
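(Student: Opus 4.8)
The plan is to derive local non-coexistence from the duality formula (\ref{duality_formula}) by choosing the dual initial condition $\phi$ to be a small multiple of an approximate identity concentrated near a fixed test function, and then letting $T \to \infty$. The key is to show that the right-hand side of (\ref{duality_formula}) converges to $1$ as $T \to \infty$, for the rescaled initial conditions coming from $U^2$, which will force $\langle U^2_T, g\rangle \to 0$ in probability. Concretely, I would first reduce to the process $X = U^1 + U^2$ and the deterministic $Y_t = S_t Y_0$ with $Y_0 = \phi - \psi$. Since $\phi, \psi \in \mathcal{M}$ with $\overline\phi \ge \overline\psi$, one has $Y_0 \in C_b$ with $\overline{Y_0} = \overline\phi - \overline\psi \ge 0$, and a standard heat-kernel / ergodic-averaging argument shows $S_t Y_0 \to \overline{Y_0}$ and $S_t X_0 \to \overline\phi + \overline\psi$ locally uniformly as $t \to \infty$. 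Similarly $\overline\phi + \overline\psi - (\overline\phi - \overline\psi) = 2\overline\psi$, which is the "mass" of $U^2$ that we expect to vanish.

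Next, apply Theorem~\ref{prop: duality} with $\phi$ replaced by $\theta \phi_\varepsilon$ where $\phi_\varepsilon$ is a fixed nonnegative $C_{\text{rap}}$ bump (or, to capture $\langle U^2_T, g\rangle$, use that $U^2 = \tfrac12(X - Y)$ and $Y_T$ is deterministic, so it suffices to control $\langle X_T, \phi\rangle$). The duality gives
\begin{equation}
\notag
\mathbb{E}\left[e^{-\langle X_T, \phi\rangle}\right] = \mathbb{E}\left[e^{-\langle X_0, V_T\rangle - \frac12\int_0^T \langle V_r^2, (Y_{T-r})^2\rangle\, \mathrm{d}r}\right].
\end{equation}
The PAM dual $V$ with $V_0 = \phi \in C_{\text{rap}}^+$ is a nonnegative martingale in the sense that $\langle V_t, \mathbf{1}\rangle$ is a nonnegative martingale (this is the PAM total-mass martingale), and moreover $\mathbb{E}\langle V_t, g\rangle = \langle \phi, S_t g\rangle$ stays bounded. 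The heart of the matter is to understand the exponent on the right as $T \to \infty$: writing $\int_0^T \langle V_r^2, (Y_{T-r})^2\rangle\,\mathrm{d}r$, for small $r$ the factor $(Y_{T-r})^2 \approx (\overline{Y_0})^2 = (\overline\phi - \overline\psi)^2$, while $\langle X_0, V_T\rangle \approx (\overline\phi + \overline\psi)\langle V_T, \mathbf{1}\rangle$. So the exponent behaves like $(\overline\phi+\overline\psi)\langle V_T,\mathbf 1\rangle + \tfrac12(\overline\phi-\overline\psi)^2 \int_0^T \langle V_r^2,\mathbf 1\rangle\,\mathrm{d}r$ plus lower-order terms; and the point is that $\langle V_T, \mathbf{1}\rangle + (\overline\phi-\overline\psi)^2\int_0^T\langle V_r^2,\mathbf 1\rangle$ should capture, in the limit, exactly the "total dual mass" whose expectation matches $\langle \phi, \mathbf 1\rangle$. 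I would identify $\mathbb{E}\big[e^{-\text{(exponent)}}\big] \to \mathbb{E}\big[e^{-2\overline\psi \, \ell_\infty}\big]$ for an appropriate limiting random variable $\ell_\infty$, and then run the same computation with $\theta\phi$ in place of $\phi$, $\theta \downarrow 0$, to conclude $\mathbb{E}[e^{-\langle X_T,\theta\phi\rangle}] \to \mathbb{E}[e^{-\langle X_\infty, \theta\phi\rangle}]$ forces the $U^2$ part of $X_\infty$ to be $0$; equivalently, combining with the deterministic limit of $Y_T$, one gets $\langle U^2_T, g\rangle \to 0$ in probability and $\langle U^1_T, g\rangle \to (\overline\phi - \overline\psi)\langle g,\mathbf 1\rangle$.

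Finally, to pass from finite-mass statements to the non-integrable $\mathcal{M}$ setting one uses Lemma~\ref{local_coexistence_is_ok}: the family $\{U^i_t\}$ is tight in $M_{\text{tem}}$, so convergence in probability of $\langle U^i_t, g\rangle$ for all $g \in C_{\text{exp}}$ upgrades to convergence in probability of $U^i_t$ in $M_{\text{tem}}$, and every limit point is deterministic ($0$ resp. $(\overline\phi-\overline\psi)\mathbf 1$), giving local non-coexistence in the sense of Definition~\ref{def: mean_coexist}. The main obstacle I anticipate is the rigorous control of the exponent's limit: showing that $\int_0^T \langle V_r^2, (Y_{T-r})^2\rangle\,\mathrm{d}r$ converges (after replacing $Y_{T-r}$ by its asymptotic constant and controlling the error uniformly in $T$, which requires quantitative bounds on $\|S_t Y_0 - \overline{Y_0}\|$ against the growth of $\langle V_r^2, \mathbf 1\rangle$), and identifying the limiting random variable $\ell_\infty$ — presumably via an $L^1$ or $L^2$ martingale argument for the PAM total mass, together with the fact that in one spatial dimension $\int_0^\infty \langle V_r^2,\mathbf 1\rangle\,\mathrm{d}r = \infty$ a.s. on survival, so that the only way the right-hand side can avoid collapsing to $\mathbf 1{\{\text{extinction}\}}$-type behavior is exactly the compensating structure that makes $\langle U^2_\infty, g\rangle = 0$.
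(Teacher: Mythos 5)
Your overall skeleton---apply the duality formula with a $C_{\text{rap}}^+$ dual initial condition, let $T\to\infty$, use that the PAM total mass dies out and that $S_tY_0\to\overline{Y}_0$, then pass to $M_{\text{tem}}$---matches the paper's proof. The genuine gap is in the heart of the matter you yourself flag: the identification of the limit of the right-hand side of the duality, and several of your claims there are wrong. The term $\langle X_0,V_T\rangle$ needs no averaging heuristic: $0\le\langle X_0,V_T\rangle\le\lVert X_0\rVert_\infty\langle V_T,\mathbf{1}\rangle\to0$ (the paper gets this in probability from $\mathbb{E}[\sqrt{\langle V_T,\mathbf{1}\rangle}]\to0$), and there is no nontrivial limiting random variable $\ell_\infty$ of the dual total mass, since the nonnegative martingale $\langle V_t,\mathbf{1}\rangle$ converges to $0$ almost surely. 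For the integral term, dominated convergence (using $\lVert S_tY_0\rVert_\infty\le\lVert Y_0\rVert_\infty$ and the pointwise convergence $Y_{T-r}\to\overline{Y}_0$) gives $\int_0^T\langle V_r^2,Y_{T-r}^2\rangle\,\mathrm{d}r\to(\overline{Y}_0)^2\int_0^\infty\langle V_r^2,\mathbf{1}\rangle\,\mathrm{d}r$, and crucially this last integral is \emph{finite} almost surely, being the quadratic variation of the a.s.-convergent martingale $\langle V_t,\mathbf{1}\rangle$. Your assertion that $\int_0^\infty\langle V_r^2,\mathbf{1}\rangle\,\mathrm{d}r=\infty$ a.s.\ on survival is false and would derail the dominated-convergence step.

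After these limits the right-hand side becomes $\mathbb{E}\bigl[\exp\bigl(-\tfrac12(\overline{Y}_0)^2\int_0^\infty\langle V_r^2,\mathbf{1}\rangle\,\mathrm{d}r\bigr)\bigr]$, and the missing idea is how to evaluate this expectation in closed form. The paper does it via an exponential-martingale identity: $-\overline{Y}_0\langle V_t,\mathbf{1}\rangle$ is a martingale with quadratic variation $(\overline{Y}_0)^2\int_0^t\langle V_s^2,\mathbf{1}\rangle\,\mathrm{d}s$, so the associated exponential local martingale is a bounded martingale with constant expectation $e^{-\overline{Y}_0\langle f,\mathbf{1}\rangle}$; letting $t\to\infty$ and using $\langle V_t,\mathbf{1}\rangle\to0$ yields $\mathbb{E}\bigl[\exp\bigl(-\tfrac12(\overline{Y}_0)^2\int_0^\infty\langle V_s^2,\mathbf{1}\rangle\,\mathrm{d}s\bigr)\bigr]=e^{-\overline{Y}_0\langle f,\mathbf{1}\rangle}$, which is exactly the Laplace transform of the claimed deterministic limit $(\overline\phi-\overline\psi)\langle f,\mathbf{1}\rangle$ of $\langle X_T,f\rangle$. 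Your proposed limit $\mathbb{E}[e^{-2\overline\psi\,\ell_\infty}]$ is not this (with $\ell_\infty=0$ it equals $1$, i.e.\ $\langle X_T,f\rangle\to0$, which is false unless $\overline\phi=\overline\psi$), and the subsequent $\theta\downarrow0$ step does not repair it. The remaining pieces of your outline---deterministic $Y$, the reduction $U^2=\tfrac12(X-Y)$, and the extension from $C_{\text{rap}}$ to $C_{\text{exp}}$ test functions (the paper uses a first-moment truncation rather than tightness, but either works)---are fine.
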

\noindent
Note that the above theorems not only state that coexistence is impossible, but even allow us to determine which population becomes extinct based on the initial conditions. In both cases, the population that is 'smaller' at time $t=0$ dies out, but the notion of 'smaller' differs. For integrable initial conditions, 'smaller' refers to smaller total mass, while for initial conditions with a well-defined mean, it refers to smaller average.  
\section{Proofs}
\subsection{Proof of Theorem ~\ref{prop: duality}}
\noindent
In what follows, fix arbitrary $\mu \in \mathscr{P}(C_{\text{tem}}^+ \times C_{\text{tem}}^+)$ with compact support, and let $(U^1_t, U^2_t)_{t \geq 0}$ be a solution of the martingale problem $(MP)_U$ on $(\Omega, \mathcal{F}, \left( \mathcal{F}_t \right)_{t \geq 0}, \mathbb{P})$ with initial distribution $\mu$ and sample paths in $C_{C_{\text{tem}}^+ \times C_{\text{tem}}^+}[0,\infty)$. As usual, let $X_t = U_t^1 + U_t^2, Y_t = U_t^1 - U_t^2 = S_t(U^1_0 - U^2_0)$. \\ First, we construct a martingale problem associated with the dual process $V$. Let $(\mathcal{F}_t^V)_{t \geq 0}$ be the filtration generated by $V = (V_t)_{t \geq 0}$. Similarly to the construction of the martingale problem $(MP)_U$, we observe that any $C_{\text{rap}}^+$-valued solution to $(\ref{eq: dual})$ satisfies the following martingale problem:

\begin{equation}
\notag
\label{eq: dual_Mart_prob}
\begin{aligned} (MP)_V
\begin{cases}
    \text{For each $T>0, \phi \in C_{\text{tem}}$} \\
    M_t^{\phi, T} = \langle S_{T-t}V_t, \phi \rangle - \langle S_TV_0, \phi \rangle  \\
    \text{is a continuous square integrable $\mathcal{F}_t^{V}$-martingale on $[0,T]$ such that:} \\
    \langle M^{\phi, T} \rangle_{t} = \int_0^t \langle V_s^2, \left(S_{T-s}\phi\right)^2 \rangle \mathrm{d}s.\\
\end{cases}
\end{aligned}
\end{equation}

\begin{remark}
\label{remark_test_functions}
    The martingale problem representation is helpful whenever we use It\^o's formula for an expression involving both processes $X$ and $V$, each with a different time index.
    Whenever $V_s$ is fixed, we can use it as a test function for the martingale problem $(MP)_X$. Similarly, whenever $X_t$ is fixed, we can use it as a test function for the martingale problem $(MP)_V$. This observation will be frequently used in the proof of Proposition $\ref{prop: duality}$.
\end{remark}
\begin{remark}
\label{c_rap_to_C_tem}
The martingale problem $(MP)_V$ is equivalent to $(\ref{eq: dual})$ whenever the sample paths of the solution is in $C_{\text{rap}}^+$. If the sample paths of the solution is in $C_{\text{tem}}^+$, the only difference in the corresponding martingale problem is that the test functions should be taken from $C_{\text{rap}}$. 
\end{remark}
\noindent
The following lemma will be of frequent use.
\begin{lemma}
\label{lemma: dual_total_mass_mart}
    Let $(V_t)_{t \geq 0}$ be the unique $C_{\text{tem}}^+$-valued solution to $(\ref{eq: dual})$ with $V_0 \in C_{\text{tem}}^+ \cap L^1(\mathbb{R})$. Then, for any $\varphi \in C_b^+$, and $T >0$ the process 
$\left(\left \langle S_{T-t}V_t, \varphi \right \rangle \right)_{t \in [0,T]}$ is a continuous square integrable $\mathcal{F}_t^V$-martingale. In particular, $\left (\left \langle V_t, \bold{1} \right \rangle \right)_{t \geq 0}$ is a continuous square integrable martingale.
\end{lemma}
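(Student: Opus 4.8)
The plan is to verify the two defining properties of a martingale --- integrability together with the martingale identity --- by starting from the mild (Walsh) formulation of the parabolic Anderson model \eqref{eq: dual} and applying the stochastic Fubini theorem. First I would write, for $V_0 = \psi \in C_{\text{tem}}^+ \cap L^1(\mathbb{R})$, the mild form
$$
V_t(y) = S_t\psi(y) + \int_0^t\!\!\int_{\mathbb{R}} p_{t-s}(y-z)\,V_s(z)\,\dot W(\mathrm{d}s,\mathrm{d}z),
$$
and then pair against $p_{T-t}(x-\cdot)$ and integrate in $x$ against $\varphi \in C_b^+$. After an application of the stochastic Fubini theorem (justified by the moment bounds below and the semigroup property $\int p_{T-t}(x-y)p_{t-s}(y-z)\,\mathrm{d}y = p_{T-s}(x-z)$), the deterministic term collapses to $\langle S_T\psi,\varphi\rangle$ and one is left with
$$
\langle S_{T-t}V_t,\varphi\rangle
= \langle S_T\psi,\varphi\rangle
+ \int_0^t\!\!\int_{\mathbb{R}} \Big(\!\int_{\mathbb{R}} p_{T-s}(x-z)\varphi(x)\,\mathrm{d}x\Big) V_s(z)\,\dot W(\mathrm{d}s,\mathrm{d}z),
$$
which exhibits the process as a stochastic integral, hence (once square-integrability of the integrand is checked) a continuous square-integrable $\mathcal{F}_t^V$-martingale on $[0,T]$.

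The key estimate that makes everything go through is a first- and second-moment bound on $V_s(z)$ that is integrable in $z$ and locally bounded in $s$. Taking expectations in the mild equation and using $\mathbb{E}[V_s(z)] = S_s\psi(z)$, together with the standard second-moment identity for the parabolic Anderson model
$$
\mathbb{E}[V_s(z)^2] = (S_s\psi(z))^2 + \int_0^s\!\!\int_{\mathbb{R}} p_{s-r}(z-w)^2\,\mathbb{E}[V_r(w)^2]\,\mathrm{d}w\,\mathrm{d}r,
$$
a Gronwall-type argument gives $\sup_{s\le T}\int_{\mathbb{R}} \mathbb{E}[V_s(z)^2]\,\mathrm{d}z < \infty$ whenever $\psi \in L^1(\mathbb{R})\cap C_{\text{tem}}^+$ (note $\|p_r\|_2^2 \sim r^{-1/2}$ is integrable near $0$, and $\|S_s\psi\|_1 \le \|\psi\|_1$ while $\|S_s\psi\|_2^2 \le \|\psi\|_1\|S_s\psi\|_\infty$ stays bounded on $[0,T]$). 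Since $\big|\int p_{T-s}(x-z)\varphi(x)\,\mathrm{d}x\big| \le \|\varphi\|_\infty$, the quadratic variation of the stochastic integral is dominated by $\|\varphi\|_\infty^2 \int_0^T\!\int_{\mathbb{R}} \mathbb{E}[V_s(z)^2]\,\mathrm{d}z\,\mathrm{d}s < \infty$, so Walsh's theory (Theorem 2.5 in \cite{MR876085}) delivers the continuous square-integrable martingale property, and the stochastic Fubini application is legitimate. The ``in particular'' statement follows by letting $T\to\infty$ with $\varphi \equiv \bold{1}$: for each fixed $T$ and $\varphi=\bold{1}$ one has $\langle S_{T-t}V_t,\bold{1}\rangle = \langle V_t,\bold{1}\rangle$ by mass conservation of the heat semigroup, so the claim is immediate once we know $\langle V_t,\bold{1}\rangle$ is finite a.s.\ and integrable, which the above moment bound provides.

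The main obstacle I anticipate is purely technical: rigorously justifying the stochastic Fubini interchange and the integrability of $\langle V_t, \bold{1}\rangle$ simultaneously requires that the a priori second-moment bound be established \emph{before} the interchange is performed, so the argument must be organized so that the Gronwall estimate on $\int_{\mathbb{R}}\mathbb{E}[V_s(z)^2]\,\mathrm{d}z$ comes first and stands on its own (using only the mild formulation, not the martingale conclusion). A secondary point worth care is that $V$ is taken $C_{\text{tem}}^+$-valued rather than $C_{\text{rap}}^+$-valued here, so one should not invoke the $(MP)_V$ formulation directly with $C_{\text{tem}}$ test functions; working from the mild form avoids this and also avoids any appeal to Remark \ref{c_rap_to_C_tem}. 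Everything else --- continuity of the stochastic integral in $t$, adaptedness to $\mathcal{F}_t^V$ --- is standard Walsh theory.
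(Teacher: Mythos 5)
Your proof is essentially correct, but it takes a genuinely different route from the paper. The paper's own proof is a two-line sketch: approximate $\varphi \in C_b^+$ from below by functions $\varphi_n \in C_{\text{rap}}^+$ with $\varphi_n \uparrow \varphi$, invoke the martingale problem for the $C_{\text{tem}}^+$-valued solution with $C_{\text{rap}}$ test functions (Remark~\ref{c_rap_to_C_tem}), and pass to the limit; for $\varphi=\bold{1}$ it points to existing references. You instead work from the mild Walsh formulation, apply the stochastic Fubini theorem to exhibit $\langle S_{T-t}V_t,\varphi\rangle$ directly as a stochastic integral, and justify everything with an a priori Gronwall bound on $\int_{\mathbb{R}}\mathbb{E}[V_s(z)^2]\,\mathrm{d}z$. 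Your route is more self-contained and makes the square-integrability claim explicit (which the paper's sketch leaves implicit --- the limit of martingales argument also ultimately needs such a second-moment bound to upgrade from ``local martingale'' to ``square integrable martingale''), at the cost of redoing moment estimates that the approximation argument outsources to the already-established martingale problem. One small inaccuracy to fix: since $V_0=\psi$ is only assumed to lie in $C_{\text{tem}}^+\cap L^1(\mathbb{R})$, it need not be bounded, so your parenthetical claim that $\|S_s\psi\|_2^2 \le \|\psi\|_1\|S_s\psi\|_\infty$ ``stays bounded on $[0,T]$'' can fail at $s=0$ (take $\psi$ with tall thin spikes, so $\psi\in L^1\setminus L^2$). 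The correct statement is $\|S_s\psi\|_2^2 \le \|\psi\|_1^2 (2\pi s)^{-1/2}$ for $s>0$, which is merely integrable on $[0,T]$; the singular Gronwall lemma still applies with an integrable (rather than bounded) forcing term, so the conclusion $\int_0^T\int_{\mathbb{R}}\mathbb{E}[V_s(z)^2]\,\mathrm{d}z\,\mathrm{d}s<\infty$ survives and the rest of your argument is unaffected.
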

\begin{proof}
This is easy by approximating $\varphi$ with a sequence $\varphi_n \in C_{\text{rap}}^+$ such that $\varphi_n \uparrow \varphi$ as $n \to \infty$. We leave details to the reader. For the case of $\varphi = \bold{1}$, see also the proof of Proposition 3.6 in \cite{MR4700256} or the discussion after Proposition 3.7 in \cite{MR4635721}.
\end{proof}
\noindent
We have gathered all the tools we need to prove Theorem $\ref{prop: duality}$.
\begin{proof}[Proof of Theorem 2.3]
Let $\phi \in C_{\text{rap}}^+$ and let $(V_t)_{t \geq 0}$ be the unique $C_{\text{rap}}^+$-valued solution of $(\ref{eq: dual})$ with $V_0 = \phi$. Let $T > 0$ and for $s,t \geq 0$ such that $s+t \leq T$ define \begin{equation}
\notag
    f(s,t) = \mathbb{E}\left[ e^{-\langle X_t, S_{T-t-s}V_s \rangle - \frac{1}{2}\int_0^s \langle V_r^2, Y_{T-r}^2  \rangle \mathrm{d}r} \right].
\end{equation} We show that $f(0,T) = f(T,0)$, which is $(\ref{duality_formula})$. Fix arbitrary $s,T \geq 0$. Then, for $\mathbb{P}$-a.s. $V_s$ we apply It\^o's formula to $e^{-\langle S_{T-t}X_t, V_s \rangle}$ as a function of $X$ (recall that $X$ and $V$ are independent). Thus, we get:
\begin{equation}
\label{Ito1}
    \mathrm{d}e^{-\langle S_{T-t}X_t, V_s \rangle} = \frac{1}{2}e^{-\langle S_{T-t}X_t,V_s \rangle} \langle X_t^2 - Y_t^2, (S_{T-t}V_s)^2 \rangle\mathrm{d}t + \mathrm{d}M_t^{X,V_s}, t \in [0,T].
\end{equation}
where $t \to M_t^{X,V_s}$ is an $\left(\mathcal{F}_t\right)_{t \geq 0 }$-martingale. Multiplying $(\ref{Ito1})$ by $e^{-\frac{1}{2}\int_0^s \langle V_r^2, Y^2_{T+s-r} \rangle \mathrm{d}r}$, taking expected value (recall that $Y_t = S_tY_0$ is $\mathcal{F}_0$-measurable for any $t \geq 0$) and then replacing $T$ by $T-s$, we get:
\begin{equation}
\label{abs_cont_t}
    f(s,t) = f(s,0) + \int_0^t h_1(s,r) \mathrm{d}r, \quad 0 \leq t+s \leq T,
\end{equation}
where $h_1(s,r) = \mathbb{E}\left[\frac{1}{2}e^{-\langle S_{T-s-r}X_r, V_s \rangle - \frac{1}{2}\int_0^s \langle V_u^2, Y_{T-u}^2  \rangle \mathrm{d}u}\langle (X^2_r - Y^2_r),\left(S_{T-s-r}V_s\right)^2 \rangle \right]$.
Now fix $t,T \geq 0$. Similarly, for $\mathbb{P}$-a.s. $X_t$ we apply It\^o's formula to $e^{-\langle X_t, S_{T-s}V_s \rangle - \frac{1}{2} \int_0^s \langle V_r^2,Y^2_{T+t-r} \rangle \mathrm{d}r}$ as a function of $V$. We get:
\begin{equation}
\label{Ito2}
\begin{split}
&\mathrm{d}e^{-\langle X_t, S_{T-s}V_s \rangle - \frac{1}{2} \int_0^s \langle V_r^2,Y^2_{T+t-r} \rangle \mathrm{d}r} \\ &= \frac{1}{2}e^{-\langle X_t, S_{T-s}V_s \rangle - \frac{1}{2}\int_0^s \langle V_r^2, Y_{T+t-r}^2  \rangle \mathrm{d}r}\left(\langle V_s^2,\left(S_{T-s}X_t\right)^2 - Y_{T+t - s}^2  \rangle \right) \mathrm{d}s + \mathrm{d}M_s^{V,X_t}.
\end{split}
\end{equation}
where $s \to M_s^{V,X_t}$ is an $\left(\mathcal{F}_s^V \right)_{s \geq 0 }$-martingale.
Taking expectation on $(\ref{Ito2})$ and replacing $T$ with $T-t$ we get:
\begin{equation}
\label{abs_cont_s}
    f(s, t) = f(0,t) + \int_0^s h_2(r,t) \mathrm{d}r, \quad 0 \leq t+s \leq T,
\end{equation}
where $h_2(r,t) = \mathbb{E}\left[\frac{1}{2}e^{-\langle X_t, S_{T-r-t}V_r \rangle - \frac{1}{2}\int_0^r \langle V_u^2, Y_{T-u}^2  \rangle \mathrm{d}u}\left(\langle V_r^2,\left(S_{T-r-t}X_t\right)^2  \rangle - \langle V_r^2, Y_{T - r}^2 \rangle \right) \right]$.
Note that  $(\ref{abs_cont_t})$ and $(\ref{abs_cont_s})$ shows absolute continuity in both $s$ and $t$, which allows us to apply Lemma 4.4.10 of \cite{MR838085} to get:
\begin{equation}
\label{eq: FundementalThmCalculus}
    f(t,0) - f(0,t) = \int_0^t h_1(s, t-s) - h_2(s,t-s) \mathrm{d}s,
\end{equation}
for almost every $t \in [0,T]$. Observe that $h_1(s,T-s) = h_2(s,T-s)$, so it is sufficient to show that $(\ref{eq: FundementalThmCalculus})$ is satisfied for $t=T$. This can be done through a standard approximation procedure, compare, for instance, \cite{MR1653845}.
\end{proof}
\noindent
This finishes the proof of uniqueness. Recall that a non-negative solution of $(\ref{eq: dual})$ coincides with the solution to $(\ref{eq: formal})$ with initial conditions $U_0^1 = U_0^2$. This observation yields the following immediate corollary, establishing a self-duality property for the parabolic Anderson model, which is in fact well known in the area, and will be useful for us in the longtime behavior analysis. 
\begin{corollary}
\label{cor: duality}
    Let $\dot{W}$ and $\dot{W_0}$ be two independent white noises on $\mathbb{R}_+ \times \mathbb{R}$. Let $\psi \in C_{\text{tem}}^+$ and $\phi \in C_{\text{rap}}^+$. Define $(V_t)_{t \geq 0}$ and $(\Tilde{V}_t)_{t \geq 0}$ as the unique solutions with values in $C_{\text{tem}}^+$ and $C_{\text{rap}}^+$, respectively, of equation $(\ref{eq: dual})$, driven by $\dot{W}$ and $\dot{W_0}$, respectively, with initial conditions $V_0 = \psi$  and $\Tilde{V}_0 = \phi$. Then, for each $T > 0$, the following self-duality formula holds:
\begin{equation}
\label{duality_formula_cor}
    \mathbb{E}\left[e^{-\langle \Tilde{V}_T, \psi \rangle}\right] = \mathbb{E}\left[e^{-\langle \phi, V_T \rangle }\right].
\end{equation}
\end{corollary}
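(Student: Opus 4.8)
The plan is to derive Corollary~\ref{cor: duality} as a special case of Theorem~\ref{prop: duality}. The key observation, noted in the text just before the corollary, is that a non-negative solution $V$ of the parabolic Anderson model~(\ref{eq: dual}) with initial condition $V_0 = \psi$ is precisely a solution of the symbiotic branching system~(\ref{eq: formal}) with $U^1_0 = U^2_0 = \psi/2$ and $U^1_t = U^2_t = V_t/2$; indeed, with these choices $\sqrt{U^1_t U^2_t} = U^1_t = V_t/2$, and both equations of~(\ref{eq: formal}) reduce to~(\ref{eq: dual}) scaled by $1/2$. Hence, setting $U^1_0 = U^2_0 = \psi/2$ makes $X_t = U^1_t + U^2_t = V_t$ and $Y_t = U^1_t - U^2_t \equiv 0$ for all $t \geq 0$.

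Concretely, I would fix $\psi \in C_{\text{tem}}^+$ and $\phi \in C_{\text{rap}}^+$, let $V$ solve~(\ref{eq: dual}) driven by $\dot W$ with $V_0 = \psi$, and let $\tilde V$ solve~(\ref{eq: dual}) driven by the independent $\dot W_0$ with $\tilde V_0 = \phi$. First I would check that $(U^1, U^2) := (V/2, V/2)$ is a solution of the martingale problem $(MP)_U$ with deterministic initial condition $\mu = \delta_{(\psi/2,\psi/2)}$ and sample paths in $C_{C_{\text{tem}}^+ \times C_{\text{tem}}^+}[0,\infty)$: the martingale $M^{\phi,T}_t = \langle S_{T-t} U^i_t, \phi\rangle - \langle S_T U^i_0, \phi\rangle = \frac12(\langle S_{T-t} V_t, \phi\rangle - \langle S_T \psi, \phi\rangle)$ has quadratic variation $\int_0^t \langle U^1_s U^2_s, (S_{T-s}\phi)^2\rangle\,\mathrm{d}s = \int_0^t \langle (V_s/2)^2, (S_{T-s}\phi)^2\rangle\,\mathrm{d}s$, which matches the form required by $(MP)_V$ (up to the scaling) since $V$ solves~(\ref{eq: dual}); one should note $\mu = \delta_{(\psi/2,\psi/2)}$ trivially has compact support in $C_{\text{tem}}^+ \times C_{\text{tem}}^+$. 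Then I would apply Theorem~\ref{prop: duality} with this $\mu$, with the role of the dual solution $(V_t)$ in the theorem played by $\tilde V$ (initial condition $\phi \in C_{\text{rap}}^+$, driven by the independent noise $\dot W_0$). Since here $X_T = V_T$, $X_0 = \psi$, and $Y_{T-r} \equiv 0$, the integral term $\frac12\int_0^T \langle \tilde V_r^2, (Y_{T-r})^2\rangle\,\mathrm{d}r$ vanishes, and~(\ref{duality_formula}) becomes exactly
\begin{equation}
\notag
\mathbb{E}\!\left[e^{-\langle V_T, \phi\rangle}\right] = \mathbb{E}\!\left[e^{-\langle \psi, \tilde V_T\rangle}\right],
\end{equation}
which after renaming is~(\ref{duality_formula_cor}) (using symmetry of $\langle\cdot,\cdot\rangle$ and that $\langle V_T,\phi\rangle$ here plays the role of $\langle \tilde V_T, \psi\rangle$ in the statement, the two noises being interchangeable in the roles of primal/dual driver).

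I do not expect any serious obstacle here, since the corollary is a genuine specialization and the text itself calls the resulting self-duality ``well known in the area.'' The only point requiring a line of care is the bookkeeping of which noise drives which process: in Theorem~\ref{prop: duality} the primal process $U$ (hence $X = V/2 \cdot 2 = V$) is driven by one white noise and the dual $V$ (here $\tilde V$) by an independent one, matching the Convention stated before Theorem~\ref{prop: duality}; one must then observe that the self-duality formula~(\ref{duality_formula_cor}) is symmetric in the pair $(\psi, \dot W)$ versus $(\phi, \dot W_0)$ up to relabeling, so the direction in which Theorem~\ref{prop: duality} is applied is immaterial. A second minor point is to confirm that $V/2$ indeed has sample paths in $C_{C_{\text{tem}}^+}[0,\infty)$ when $V_0 = \psi \in C_{\text{tem}}^+$, which is exactly the existence/uniqueness statement (via Theorem 2.2 of~\cite{MR1271224}) quoted in the excerpt, so no new work is needed.
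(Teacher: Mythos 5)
Your proposal is correct and is exactly the paper's (implicit) argument: the paper derives the corollary by observing that a non-negative solution of~(\ref{eq: dual}) is the solution of~(\ref{eq: formal}) with $U_0^1=U_0^2$, so that $Y\equiv 0$ and the integral term in~(\ref{duality_formula}) vanishes. Your version merely makes explicit the factor-of-$2$ bookkeeping and the verification that $(V/2,V/2)$ satisfies $(MP)_U$, which the paper leaves unsaid.
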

\subsection{Longtime behavior. Proofs of Theorems~\ref{main_res: global_longtime}, \ref{main_res: local_longtime}.}
\label{sec:3.2}
Our analysis of the longtime behavior relies on some basic results about the longtime behavior of the solution to the heat equation, and the longtime behavior of the parabolic Anderson model, our dual process. We first prove  these results, and then Theorem $\ref{main_res: global_longtime}$ and Theorem $\ref{main_res: local_longtime}$  will follow by using our duality formula $(\ref{duality_formula})$. \\
We start with the results about the heat equation. The following proposition and the corresponding corollary state the if we start the heat equation with integrable initial condition, then the total mass of the solution is conserved over time, and as long as the initial condition has non negative total mass, then the mass of the negative part vanishes as $t \to \infty$.
\begin{proposition}
\label{heat_longtime_1}
 Let $f \in L^1(\mathbb{R})$ and let $u(t,\cdot)= S_tf(\cdot)$. Denote $M = \langle f, \bold{1} \rangle$. Then,
 \begin{enumerate}[label=(\alph*)]
     \item For all $t \geq 0$: $\langle u(t,\cdot), \bold{1} \rangle = M.$
     \item $\lim_{t \to \infty} \langle |u(t, \cdot) - Mp_t(\cdot) | , \bold{1}\rangle = 0.$
 \end{enumerate}
 
\end{proposition}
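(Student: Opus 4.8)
The plan is to handle the two parts separately: part (a) is an immediate application of Tonelli's theorem, while part (b) reduces, after rescaling, to the continuity of translations in $L^1(\mathbb{R})$ together with a dominated convergence argument.

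For part (a), I would write
\[
\langle u(t,\cdot), \mathbf{1}\rangle = \int_{\mathbb{R}}\int_{\mathbb{R}} p_t(x-y) f(y)\,\mathrm{d}y\,\mathrm{d}x ,
\]
and observe that, since $p_t \geq 0$ and $\int_{\mathbb{R}} p_t(x-y)\,\mathrm{d}x = 1$ for every $y$, the double integral of the absolute value equals $\|f\|_{L^1(\mathbb{R})} < \infty$. Hence Tonelli's theorem applies and permits interchanging the order of integration; the inner integral $\int_{\mathbb{R}} p_t(x-y)\,\mathrm{d}x$ equals $1$, so $\langle u(t,\cdot),\mathbf{1}\rangle = \int_{\mathbb{R}} f(y)\,\mathrm{d}y = M$ for every $t \geq 0$.

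For part (b), the key algebraic identity is
\[
u(t,x) - M p_t(x) = \int_{\mathbb{R}} \bigl(p_t(x-y) - p_t(x)\bigr) f(y)\,\mathrm{d}y ,
\]
obtained by subtracting $p_t(x)\int_{\mathbb{R}} f(y)\,\mathrm{d}y$ from $u(t,x) = \int_{\mathbb{R}} p_t(x-y) f(y)\,\mathrm{d}y$. Applying the triangle inequality for integrals and then Tonelli once more,
\[
\bigl\langle |u(t,\cdot) - M p_t(\cdot)|, \mathbf{1}\bigr\rangle \leq \int_{\mathbb{R}} |f(y)| \left( \int_{\mathbb{R}} |p_t(x-y) - p_t(x)|\,\mathrm{d}x \right) \mathrm{d}y .
\]
I would then use the scaling relation $p_t(x) = t^{-1/2} p_1(t^{-1/2} x)$ to rewrite the inner integral, by the change of variables $x = t^{1/2} z$, as $\int_{\mathbb{R}} |p_1(z - t^{-1/2} y) - p_1(z)|\,\mathrm{d}z$, i.e. the $L^1$-distance between $p_1$ and its translate by $t^{-1/2} y$.

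To finish, I would invoke two standard facts about this quantity: it is bounded above by $2\|p_1\|_{L^1(\mathbb{R})} = 2$, uniformly in $t$ and $y$; and, for each fixed $y$, it converges to $0$ as $t \to \infty$, since $t^{-1/2} y \to 0$ and translation acts continuously on $L^1(\mathbb{R})$. As $2|f(y)|$ is integrable, the dominated convergence theorem applied to the integral in $y$ above yields that the right-hand side tends to $0$ as $t \to \infty$, proving (b). I expect no genuine obstacle here; the only point requiring mild care is setting up the dominated convergence step so that the uniform-in-$(t,y)$ bound and the pointwise-in-$y$ convergence are both in hand — everything else is routine bookkeeping with Tonelli and the Gaussian scaling.
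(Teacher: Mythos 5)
Your proof is correct. Part (a) matches the paper, which also just invokes Fubini/Tonelli. For part (b) the paper gives no argument at all: it cites (1.10) of a reference (with $N=r=1$), which is precisely the classical statement $\lVert S_t f - M p_t\rVert_{L^1}\to 0$. What you have written is the standard self-contained proof of that cited fact: the identity $u(t,x)-Mp_t(x)=\int_{\mathbb{R}}(p_t(x-y)-p_t(x))f(y)\,\mathrm{d}y$, Tonelli, the Gaussian scaling $p_t(x)=t^{-1/2}p_1(t^{-1/2}x)$ reducing the inner integral to $\lVert p_1(\cdot - t^{-1/2}y)-p_1\rVert_{L^1}$, and then dominated convergence using the uniform bound $2$ together with continuity of translation in $L^1$. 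All steps check out. The only difference from the paper is that you prove the lemma from scratch rather than outsourcing it, which makes the argument self-contained at the cost of a few lines; there is no gap.
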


\begin{proof}
     (a) follows immediately by Fubini's Theorem. (b) is a direct consequence of (1.10) in \cite{MR1135228}, with $N=r=1$.
\end{proof}
The above proposition implies the following corollary.
\begin{corollary}
\label{cor: negative part vanishes}
    Let $u$ and $M$ as above. Assume that $M \geq 0$. Then,

    $$\lim_{t \to \infty} \langle |u(t,\cdot)|, \bold{1} \rangle = \lim_{t \to \infty} \langle u^{+}(t,\cdot), \bold{1} \rangle = M,$$
    $$\lim_{t \to \infty} \langle u^{-}(t,\cdot), \bold{1} \rangle = 0.$$
\end{corollary}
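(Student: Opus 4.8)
The plan is to derive Corollary~\ref{cor: negative part vanishes} from Proposition~\ref{heat_longtime_1} by a simple triangle-inequality argument, using the fact that $p_t$ is a probability density for every $t>0$. Write $u(t,\cdot) = Mp_t(\cdot) + \big(u(t,\cdot) - Mp_t(\cdot)\big)$ and set $\varepsilon(t) = \langle |u(t,\cdot) - Mp_t(\cdot)|, \bold{1}\rangle$, which tends to $0$ as $t \to \infty$ by part (b) of the proposition.

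First I would handle $\langle |u(t,\cdot)|, \bold{1}\rangle$. Since $\big|\,|u(t,x)| - M p_t(x)\,\big| \leq |u(t,x) - Mp_t(x)|$ pointwise (here $M \geq 0$, so $Mp_t(x) \geq 0$ and equals its own absolute value), integrating over $\mathbb{R}$ gives $\big|\langle |u(t,\cdot)|, \bold{1}\rangle - M\langle p_t, \bold{1}\rangle\big| \leq \varepsilon(t)$. Because $\langle p_t, \bold{1}\rangle = 1$ for all $t > 0$, this yields $\big|\langle |u(t,\cdot)|, \bold{1}\rangle - M\big| \leq \varepsilon(t) \to 0$, hence $\lim_{t\to\infty}\langle |u(t,\cdot)|, \bold{1}\rangle = M$. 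For the positive part, note $u^+ = \tfrac{1}{2}(|u| + u)$, so $\langle u^+(t,\cdot), \bold{1}\rangle = \tfrac{1}{2}\big(\langle |u(t,\cdot)|, \bold{1}\rangle + \langle u(t,\cdot), \bold{1}\rangle\big) = \tfrac{1}{2}\big(\langle |u(t,\cdot)|, \bold{1}\rangle + M\big)$ using part (a), and letting $t \to \infty$ gives the limit $M$. Finally, $u^- = u^+ - u$, so $\langle u^-(t,\cdot), \bold{1}\rangle = \langle u^+(t,\cdot), \bold{1}\rangle - M \to M - M = 0$.

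One small technical point worth checking is that all the quantities involved are finite and the algebraic identities $u^+ = \tfrac12(|u|+u)$, $u^- = u^+ - u$ can be integrated term by term: this is immediate since $f \in L^1(\mathbb{R})$ implies $u(t,\cdot) = S_t f \in L^1(\mathbb{R})$ with $\|u(t,\cdot)\|_{L^1} \leq \|f\|_{L^1}$ by Young's inequality (or directly by Fubini as in part (a)), so $|u(t,\cdot)|$, $u^+(t,\cdot)$ and $u^-(t,\cdot)$ are all integrable. There is no real obstacle here; the only thing to be slightly careful about is keeping track of the hypothesis $M \geq 0$, which is exactly what makes $Mp_t$ coincide with its absolute value and thus lets the triangle inequality collapse to the clean bound by $\varepsilon(t)$. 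The entire argument is a few lines of elementary estimates once Proposition~\ref{heat_longtime_1} is in hand.
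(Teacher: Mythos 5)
Your argument is correct and matches what the paper intends: the paper states that the corollary follows from Proposition~\ref{heat_longtime_1} and then simply defers the details to the proof of Proposition 3.6 in the cited reference, whereas you carry out the (standard) derivation explicitly via the decomposition $u = Mp_t + (u - Mp_t)$, the triangle inequality, and the identities $u^+ = \tfrac12(|u|+u)$, $u^- = u^+ - u$. All steps, including the integrability check and the use of $M \geq 0$ to identify $|Mp_t|$ with $Mp_t$, are sound.
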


\begin{proof}
By repeating, essentially line by line, the proof of Proposition 3.6 in \cite{MR4801607}, we get the required result.
\end{proof}
\noindent
We conclude, according to Proposition $\ref{heat_longtime_1}$ and Corollary $\ref{cor: negative part vanishes}$, that whenever the initial condition for the heat equation is integrable and its total mass in non-negative, then the total mass of the solution is preserved over time, and the negative part vanishes over time. However, it is possible that the initial condition is bounded but not necessarily integrable. The following proposition gives us a uniform bound on the solution in this case.

\begin{proposition}
\label{uniform_bound_heat}
Let $f \in C_b$ and let $u(t,\cdot) = S_t(f)(\cdot)$. Then, $$\sup_{t \geq 0, x \in \mathbb{R}}\lvert u(t,x) \rvert \leq \lVert f \rVert_{\infty}.$$    
\end{proposition}
\begin{proof}
Let $M = \lVert f \rVert_{\infty}$ and note that for all $t \geq 0$ and $x \in \mathbb{R}$ we get that:
\begin{equation}
\notag
    \lvert u(t,x) \rvert = \left \lvert \int_{\mathbb{R}}p_t(x-y)f(y)\mathrm{d}y \right \rvert \leq \int_{\mathbb{R}}p_{t}(x-y)\lvert f(y) \rvert \mathrm{d}y \leq \int_{\mathbb{R}}p_{t}(x-y)M \mathrm{d}y = M.
\end{equation}
\end{proof}
\noindent
The following proposition gives us a characterization of the longtime behavior of the heat equation in the case of bounded initial datum. Recall the notation from~(\ref{eq:barf}).
\begin{proposition}
\label{convergence_of_bounded_heat}
    Let $f \in \mathcal{M}$ and let $u(t,\cdot) = S_t(f)(\cdot)$.  Then, $\lim_{t \to \infty}u(t,x) = \overline{f}$ pointwise and for every $\phi \in L^1(\mathbb{R})$, we have that $\lim_{t \to \infty} \langle u(t, \cdot), \phi \rangle = \left \langle \overline{f} \bold{1}, \phi \right \rangle.$
\end{proposition}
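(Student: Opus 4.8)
The plan is to prove the pointwise convergence first and then deduce the statement for pairings with $\phi\in L^1(\mathbb{R})$ by dominated convergence, using the uniform bound $\sup_{t\ge 0,\,x\in\mathbb{R}}|u(t,x)|\le\|f\|_\infty$ of Proposition~\ref{uniform_bound_heat}. For the pointwise part I would first reduce to $x=0$: since $p_t$ is even, $u(t,x)=\int_{\mathbb{R}}p_t(z)f(x+z)\,\mathrm{d}z=S_t\big(f(\cdot+x)\big)(0)$, and $f(\cdot+x)$ again lies in $\mathcal{M}$ with the \emph{same} mean, because translating the averaging window $[-L,L]$ by $x$ changes $\int_{-L}^{L}f$ by at most $2\|f\|_\infty|x|$, which is negligible after dividing by $2L$. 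Subtracting the constant $\overline{f}$ (which the heat semigroup leaves invariant), I may further assume $\overline{f}=0$, reducing the problem to showing $\int_{\mathbb{R}}p_t(z)f(z)\,\mathrm{d}z\to 0$ for bounded continuous $f$ with vanishing mean.

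The key step is a layer-cake representation. Since $p_t(z)=p_t(|z|)$ with $p_t$ decreasing on $[0,\infty)$ and $|p_t'(r)|=\tfrac{r}{t}p_t(r)$, one has $p_t(z)=\int_{|z|}^{\infty}|p_t'(r)|\,\mathrm{d}r$, and Tonelli (legitimate since $|\Phi(r)|\le 2r\|f\|_\infty$ and $\int_0^{\infty}\tfrac{r}{t}p_t(r)\cdot 2r\,\mathrm{d}r=1$) yields
\begin{equation}
\notag
\int_{\mathbb{R}}p_t(z)f(z)\,\mathrm{d}z=\int_0^{\infty}\frac{r}{t}\,p_t(r)\,\Phi(r)\,\mathrm{d}r,\qquad \Phi(r):=\int_{-r}^{r}f(z)\,\mathrm{d}z .
\end{equation}
Now substitute $r=\sqrt{t}\,s$: the measure $\tfrac{r}{t}p_t(r)\,\mathrm{d}r$ becomes $\tfrac{1}{\sqrt{2\pi}}\,s\,e^{-s^2/2}\,\mathrm{d}s$ and the integral becomes $\int_0^{\infty}\tfrac{1}{\sqrt{2\pi}}\,s\,e^{-s^2/2}\cdot\tfrac{\Phi(\sqrt{t}\,s)}{\sqrt{t}}\,\mathrm{d}s$. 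Since $\overline{f}=0$ gives $\Phi(r)/r\to 0$, for each fixed $s>0$ the factor $\Phi(\sqrt{t}\,s)/\sqrt{t}=s\cdot\Phi(\sqrt{t}\,s)/(\sqrt{t}\,s)\to 0$, while $|\Phi(\sqrt{t}\,s)/\sqrt{t}|\le 2\|f\|_\infty\,s$ supplies the $t$-uniform dominating function $\tfrac{2\|f\|_\infty}{\sqrt{2\pi}}\,s^2 e^{-s^2/2}$; dominated convergence finishes this case. For general $f\in\mathcal{M}$ one runs the same computation with $\Phi(r)-2r\overline{f}$ (which is $o(r)$) in place of $\Phi(r)$ and uses $\int_0^{\infty}\tfrac{r}{t}p_t(r)\cdot 2r\,\mathrm{d}r=1$ to recover the constant, obtaining $u(t,x)\to\overline{f}$ for every $x$.

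For $\phi\in L^1(\mathbb{R})$ the conclusion is then immediate: $|u(t,x)\phi(x)|\le\|f\|_\infty|\phi(x)|\in L^1(\mathbb{R})$ by Proposition~\ref{uniform_bound_heat}, and $u(t,x)\to\overline{f}$ pointwise, so dominated convergence gives $\langle u(t,\cdot),\phi\rangle\to\overline{f}\int_{\mathbb{R}}\phi=\langle\overline{f}\mathbf{1},\phi\rangle$. The only genuinely delicate point is securing a $t$-independent integrable dominating function in the rescaled integral; this is precisely where the boundedness assumption built into the definition of $\mathcal{M}$ enters, and with it the remainder is routine Tonelli and dominated-convergence bookkeeping.
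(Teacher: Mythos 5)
Your argument is correct, but it takes a genuinely different route from the paper. The paper disposes of the pointwise limit in one line by rescaling ($v(t,x)=u(t,x/\sqrt{2})-\overline{f}$, to pass from the $\tfrac12\Delta$ normalization to $\Delta$) and invoking a classical theorem from the literature which states that bounded solutions of the heat equation converge pointwise exactly when the initial datum has a Ces\`aro mean; the content of Proposition~\ref{convergence_of_bounded_heat} is essentially that citation plus the dominated-convergence step for $\phi\in L^1$. You instead prove the pointwise limit from scratch: after the (correct) reductions to $x=0$ and $\overline f=0$, the identity $p_t(z)=\int_{|z|}^{\infty}\tfrac{r}{t}p_t(r)\,\mathrm{d}r$ together with Fubini (justified since $\int p_t|f|\le\|f\|_\infty$) converts $S_tf(0)$ into $\int_0^\infty \tfrac{r}{t}p_t(r)\Phi(r)\,\mathrm{d}r$, and the parabolic scaling $r=\sqrt{t}s$ plus $\Phi(r)=o(r)$ and the dominating function $2\|f\|_\infty s^2 e^{-s^2/2}/\sqrt{2\pi}$ give the limit; the normalization $\tfrac{2}{t}\int_0^\infty r^2p_t(r)\,\mathrm{d}r=1$ correctly recovers the constant $\overline f$ in the general case. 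This is precisely the standard proof of the cited Tauberian-type result specialized to the Gaussian kernel, so what you gain is self-containedness at the cost of length; what the paper gains is brevity by outsourcing the analysis. The second half of your proof (the uniform bound $|u(t,x)|\le\|f\|_\infty$ from Proposition~\ref{uniform_bound_heat} plus dominated convergence against $\phi\in L^1$) coincides with the paper's. One cosmetic remark: your two reductions (translating to $x=0$ and then subtracting $\overline f$, versus running the computation with $\Phi(r)-2r\overline f$) accomplish the same normalization twice; either one alone suffices.
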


\begin{proof}
Define $v(t,x) = u(t, \frac{x}{\sqrt{2}}) - \overline{f}$. It is easy to see that $v$ satisfies the conditions of the theorem in \cite{MR212424}, hence we get that $\lim_{t \to \infty}v(t,x) = 0$ pointwise, which implies that $\lim_{t \to \infty}u(t,x) = \overline{f}$ pointwise, as desired. For the second part, note that by Proposition $\ref{uniform_bound_heat}$ we get the bound, $|u(t,x)\phi(x)| \leq \lVert f \rVert_{\infty} |\phi(x)| \in L^1(\mathbb{R})$, and the result follows by the Dominated Convergence Theorem. 
% This is an immediate consequence of Theorem 16.3 in \cite{MR252806}.
\end{proof}
\noindent
The above results give us what we need about the heat equation. We continue to prove results concerning the longtime behavior of the dual process.

\begin{proposition}
\label{longtime_mass_of_dual}
Let $\phi \in C_{\text{rap}}^+$ and let $(V_t)_{t \geq 0}$ be the unique $C_{\text{rap}}^+$-valued solution to $(\ref{eq: dual})$ with $V_0 = \phi$. Then, for all $f \in C_b^+$ we have $\langle V_t, f \rangle \xrightarrow[t \to \infty]{P} 0$.  
\end{proposition}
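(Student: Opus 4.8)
The plan is to reduce the assertion to a statement about a single scalar process, the total mass $\langle V_t,\mathbf 1\rangle$ of the dual, and then to identify the limit of that process. Since $V$ takes values in $C_{\text{rap}}^+$ and $f\in C_b^+$, for every $t$ one has $0\le\langle V_t,f\rangle\le\lVert f\rVert_\infty\langle V_t,\mathbf 1\rangle$, so it is enough to show $M_t:=\langle V_t,\mathbf 1\rangle\xrightarrow{P}0$. As $\phi\in C_{\text{rap}}^+\subseteq L^1(\mathbb R)\cap C_{\text{tem}}^+$, Lemma~\ref{lemma: dual_total_mass_mart} tells us that $(M_t)_{t\ge0}$ is a non-negative continuous square integrable martingale; by the martingale convergence theorem $M_t\to M_\infty$ a.s.\ for some finite $M_\infty\ge0$, and then $M_t\xrightarrow{P}0$ is equivalent to $M_\infty=0$ a.s. Its quadratic variation, read off from $(MP)_V$ by letting the test functions increase to $\mathbf 1$, is $\langle M\rangle_t=\int_0^t\langle V_s^2,\mathbf 1\rangle\,\mathrm ds$.

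To get a handle on $M_\infty$ I would feed a \emph{deterministic} symbiotic pair into the duality formula of Theorem~\ref{prop: duality}. For each $\lambda>0$ the pair $(U^1_t,U^2_t)=(\lambda\mathbf 1,0)$ solves $(MP)_U$ with the compactly supported initial law $\mu=\delta_{(\lambda\mathbf 1,0)}$: since $U^2\equiv0$ the branching term vanishes identically, so $U^1$ solves the heat equation and $U^1_t=S_t(\lambda\mathbf 1)=\lambda\mathbf 1$. For this pair $X_t=\lambda\mathbf 1$ and $Y_t=\lambda\mathbf 1$, so $(\ref{duality_formula})$ with $V_0=\phi$ and $X_T=\lambda\mathbf 1$ deterministic gives, for all $T>0$,
\[
e^{-\lambda\langle\phi,\mathbf 1\rangle}=\mathbb E\Big[\exp\!\Big(-\lambda M_T-\tfrac12\lambda^2\langle M\rangle_T\Big)\Big].
\]
Letting $T\to\infty$ by bounded convergence ($M_T\to M_\infty$ a.s.\ and $\langle M\rangle_T\uparrow\langle M\rangle_\infty$), and with the convention $e^{-\infty}=0$,
\[
e^{-\lambda\langle\phi,\mathbf 1\rangle}=\mathbb E\big[\exp\!\big(-\lambda M_\infty-\tfrac12\lambda^2\langle M\rangle_\infty\big)\big],\qquad\lambda>0 .
\]
(Letting $\lambda\downarrow0$ here, Fatou's lemma forces $\langle M\rangle_\infty<\infty$ a.s.\ and $\mathbb E[M_\infty]\le\langle\phi,\mathbf 1\rangle$; the former is in any case automatic from the a.s.\ convergence of $M$.)

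The remaining step, which I expect to be the crux, is to upgrade this to $M_\infty=0$ a.s. The displayed identity by itself does not suffice — it holds, for instance, in the noiseless situation — so one must use that the parabolic Anderson noise is genuinely dissipative for the total mass. The natural route is: (i) observe, via the cocycle/Markov property of $V$, that $\{M_\infty>0\}$ is a tail event for the driving white noise, so by a $0$--$1$ law $\mathbb P(M_\infty>0)\in\{0,1\}$; and (ii) exclude the case $\mathbb P(M_\infty>0)=1$. Step (ii) is the substantive one: it is precisely the statement that the one-dimensional stochastic heat equation with multiplicative noise, started from integrable data, has total mass tending to $0$ almost surely (equivalently, the continuum directed polymer in dimension one is always in the strong-disorder regime). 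This can be established by a moment argument exploiting intermittency — e.g.\ the exponential growth of $\mathbb E[\langle V_t^2,\mathbf 1\rangle]$, which makes $(M_t)$ fail to be uniformly integrable in the strongest possible way — or imported from the parabolic Anderson model / directed-polymer literature. Everything up to this point is an exact computation with the duality formula; it is only here that the genuinely stochastic input is needed. Once $M_\infty=0$ a.s., the bound $\langle V_t,f\rangle\le\lVert f\rVert_\infty M_t$ gives $\langle V_t,f\rangle\to0$ a.s., hence in probability, for every $f\in C_b^+$, which finishes the proof.
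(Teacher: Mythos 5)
Your reduction to the total mass $M_t=\langle V_t,\mathbf 1\rangle$ via $0\le\langle V_t,f\rangle\le\lVert f\rVert_\infty M_t$ is exactly the paper's first (and only structural) step, and your duality computation with the pair $(\lambda\mathbf 1,0)$ is a correct but ultimately idle detour: it reproduces the exponential-martingale identity for $M$, which, as you concede, cannot by itself rule out $M_\infty>0$. The genuine gap is in what you call the crux. The paper closes it with a single imported estimate: the last display in the proof of Proposition 4.3 of \cite{MR4700256} gives $\mathbb E\big[\sqrt{\langle V_t,\mathbf 1\rangle}\big]\to 0$, and then Markov's inequality applied to $\sqrt{M_t}$ yields convergence in probability directly -- no martingale convergence, no $0$--$1$ law. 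That fractional-moment bound \emph{is} the content of the proposition; a proof that merely says the needed fact "can be imported from the directed-polymer literature" has not proved it.

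Moreover, the substitute argument you sketch for step (ii) does not work. Failure of uniform integrability of the nonnegative martingale $(M_t)$ only gives $\mathbb E[M_\infty]<M_0$; it does not force $M_\infty=0$ a.s. And exponential growth of second moments (intermittency) cannot be the distinguishing mechanism: the second moment of the parabolic Anderson model grows exponentially in every dimension, including the discrete setting in $d\ge 3$ at small noise where the total-mass martingale is uniformly integrable and $M_\infty>0$ with positive probability. The proofs of ``strong disorder'' in $d=1$ all pass through fractional-moment or localization estimates, i.e., essentially the bound $\mathbb E[M_t^{1/2}]\to0$ that the paper cites. Your step (i) also needs more care: $\{M_\infty>0\}$ is not literally a tail event of the driving noise (it depends on the initial segment through $V_t$); the standard zero--one law uses positivity and linearity of the flow together with the Markov property. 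Had you simply cited the known decay of $\mathbb E[\sqrt{M_t}]$ and applied Markov's inequality, the proof would coincide with the paper's.
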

\begin{proof}
By the last display in the proof of Proposition 4.3 in \cite{MR4700256}, we have that
$$\lim_{t \to \infty} \mathbb{E}\left[ \sqrt{\langle V_t, \bold{1} \rangle} \right] = 0.$$ Denote $M = \lVert f \rVert_{\infty}$, and note that for all $\varepsilon > 0$, by the Markov inequality,
we have
\begin{equation}
\notag
    \mathbb{P}\left(|\langle V_t, f \rangle| > \varepsilon \right) \leq \mathbb{P}\left(M\langle V_t, \bold{1} \rangle > \varepsilon \right) = \mathbb{P}\left(\langle V_t, \bold{1} \rangle > \frac{\varepsilon}{M} \right) \leq \frac{\mathbb{E}\left[ \sqrt{\langle V_t, \bold{1} \rangle} \right]}{\sqrt{\frac{\varepsilon}{M}}} \xrightarrow[t \to \infty]{} 0.
\end{equation}
\end{proof}
\noindent
We now extend the above result for integrable initial conditions.  
\begin{proposition}
\label{super_longtime_mass_of_duar}
    Let $\phi \in C_{\text{tem}}^+ \cap L^1(\mathbb{R})$ and let $(V_t)_{t \geq 0}$ be the unique $C_{\text{tem}}^+$-valued solution to $(\ref{eq: dual})$ with $V_0 = \phi$. Then, for all $f \in C_b^+$ we have $\langle V_t, f \rangle \xrightarrow[t \to \infty]{P} 0$. 
\end{proposition}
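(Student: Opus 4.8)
The plan is to bootstrap from Proposition~\ref{longtime_mass_of_dual}, which already gives the claim for initial data in $C_{\text{rap}}^+$, using an approximation of $\phi$ from below together with the expectation identity $\mathbb{E}[\langle V_t, \bold{1}\rangle] = \langle V_0, \bold{1}\rangle$ supplied by Lemma~\ref{lemma: dual_total_mass_mart}. Fix $\phi \in C_{\text{tem}}^+ \cap L^1(\mathbb{R})$ and choose continuous, compactly supported functions $\phi_n$ with $0 \le \phi_n \uparrow \phi$ pointwise (for instance $\phi_n(x) = \phi(x)(1-|x|/n)^+$). Each $\phi_n$ lies in $C_{\text{rap}}^+$ (being continuous with compact support), and since $0 \le \phi_n \le \phi \in L^1(\mathbb{R})$, monotone convergence gives $\langle \phi - \phi_n, \bold{1}\rangle \to 0$ as $n \to \infty$.

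Next I would couple the relevant solutions through a single white noise $\dot{W}$: let $V$ be the $C_{\text{tem}}^+$-valued solution of (\ref{eq: dual}) with $V_0 = \phi$, let $V^n$ be the $C_{\text{rap}}^+$-valued solution with $V^n_0 = \phi_n$ (which, by pathwise uniqueness, is also \emph{the} $C_{\text{tem}}^+$-valued solution with that datum), and let $R^n$ be the $C_{\text{tem}}^+$-valued solution with $R^n_0 = \phi - \phi_n \in C_{\text{tem}}^+ \cap L^1(\mathbb{R})$, all driven by $\dot{W}$. Linearity of (\ref{eq: dual}) and pathwise uniqueness force $V_t = V^n_t + R^n_t$ for all $t$, $\mathbb{P}$-a.s.; since $R^n_t$ is $C_{\text{tem}}^+$-valued and hence nonnegative, this is a nonnegative decomposition, i.e.\ $0 \le V^n_t \le V_t$ and $V_t - V^n_t = R^n_t \ge 0$. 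Applying Lemma~\ref{lemma: dual_total_mass_mart} to $R^n$ then gives $\mathbb{E}[\langle R^n_t, \bold{1}\rangle] = \langle \phi - \phi_n, \bold{1}\rangle$ for every $t \ge 0$.

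Now fix $f \in C_b^+$ and $\varepsilon > 0$. Using $f \ge 0$, $V^n_t, R^n_t \ge 0$ and $f \le \|f\|_\infty \bold{1}$, a union bound followed by Markov's inequality and the identity for $\mathbb{E}[\langle R^n_t, \bold{1}\rangle]$ yield, for every $n$ and every $t$,
\begin{equation}
\notag
\begin{split}
\mathbb{P}\big(\langle V_t, f\rangle > 2\varepsilon\big) &\le \mathbb{P}\big(\langle V^n_t, f\rangle > \varepsilon\big) + \mathbb{P}\big(\|f\|_\infty \langle R^n_t, \bold{1}\rangle > \varepsilon\big) \\
&\le \mathbb{P}\big(\langle V^n_t, f\rangle > \varepsilon\big) + \frac{\|f\|_\infty \langle \phi - \phi_n, \bold{1}\rangle}{\varepsilon}.
\end{split}
\end{equation}
Given $\delta > 0$, I would first choose $n$ with $\|f\|_\infty \langle \phi - \phi_n, \bold{1}\rangle/\varepsilon < \delta$, and then, with this $n$ fixed, let $t \to \infty$ in the first term by applying Proposition~\ref{longtime_mass_of_dual} to the $C_{\text{rap}}^+$-valued process $V^n$. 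This gives $\limsup_{t\to\infty}\mathbb{P}(\langle V_t, f\rangle > 2\varepsilon) \le \delta$; letting $\delta \downarrow 0$ and then $\varepsilon \downarrow 0$ yields $\langle V_t, f\rangle \xrightarrow[t\to\infty]{P} 0$.

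There is no serious obstacle here: the argument is essentially routine once Proposition~\ref{longtime_mass_of_dual} and Lemma~\ref{lemma: dual_total_mass_mart} are available. The only point that requires a little care is the nonnegative decomposition $V_t = V^n_t + R^n_t$, which is exactly where linearity of the parabolic Anderson model and the use of a common driving noise enter. One could alternatively observe directly that $\langle V_t, \bold{1}\rangle$ is a nonnegative martingale (Lemma~\ref{lemma: dual_total_mass_mart}) and hence converges a.s., but identifying the limit as $0$ still requires the $C_{\text{rap}}^+$ input from Proposition~\ref{longtime_mass_of_dual}, so the approximation route above is the natural one.
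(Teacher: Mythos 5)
Your proposal is correct and follows essentially the same route as the paper: decompose $\phi = \phi_n + (\phi-\phi_n)$ with $\phi_n \in C_{\text{rap}}^+$, use linearity to split $V_t$ accordingly, control the remainder's total mass via the martingale property from Lemma~\ref{lemma: dual_total_mass_mart} and Markov's inequality, and apply Proposition~\ref{longtime_mass_of_dual} to the $C_{\text{rap}}^+$ part. The only differences are cosmetic (your piecewise-linear cutoff versus the paper's smooth bump, and the ordering of the $\varepsilon,\delta$ quantifiers).
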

\begin{proof}
First, we fix some notation. Denote $M \equiv \lVert f \rVert_{\infty}$. For $g \in C_{\text{tem}}^+$, we denote by $(V_t^{g})_{t \geq 0}$ the unique $C_{\text{tem}}^+$-valued solution to $(\ref{eq: dual})$ with $V_0^g = g$. By linearity of $(\ref{eq: dual})$, we have that $V_t^{g+h} = V_t^g + V_t^h$ for all $g,h \in C_{\text{tem}}^+$ and $t \geq 0$. Let $\phi_n(x) \equiv \phi(x) e^{\frac{1}{x^2-n^2}}\mathds{1}_{(-n,n)}(x)$ and $\psi_n(x) \equiv \phi(x) - \phi_n(x)$. Fix arbitrary  $\varepsilon > 0$. Then, by the triangle inequality, we obtain the following for all $n \in \mathbb{N}$:

\begin{equation}
\label{prob_bound}
    \mathbb{P}\left(\langle V_t, f \rangle > \varepsilon \right) \leq \mathbb{P}\left(\langle V_t^{\phi_n}, \bold{1} \rangle > \frac{\varepsilon}{2M} \right) + \mathbb{P}\left(\langle V_t^{\psi_n}, \bold{1} \rangle > \frac{\varepsilon}{2M} \right).
\end{equation}
First, we bound the rightmost term in $(\ref{prob_bound})$.  
Since $\phi_n$ converges to $\phi$, and the convergence is monotone, we have by the Monotone Convergence Theorem that $\lim_{n \to \infty}\langle \phi_n, \bold{1} \rangle = \langle \phi, \bold{1} \rangle$, hence $\lim_{n \to \infty} \langle \psi_n, \bold{1} \rangle = 0$. Take $N$ such that $\langle \psi_N, \bold{1} \rangle < \frac{\varepsilon^2}{4M}$. Recall that by Lemma $\ref{lemma: dual_total_mass_mart}$, $(\langle V_t^{\psi_N}, \bold{1} \rangle)_{t \geq 0}$ is a martingale. Thus, for each $t \geq 0$, we have that:

\begin{equation}
\notag
    \mathbb{E}[\langle V_t^{\psi_N}, \bold{1} \rangle] = \mathbb{E}[\langle V_0^{\psi_N}, \bold{1} \rangle] =  \langle \psi_N, \bold{1} \rangle < \frac{\varepsilon^2}{4M}.
\end{equation}
It follows by the Markov inequality that:
\begin{equation}
\label{small_prob_1}
    \mathbb{P}\left(\langle V_t^{\psi_N}, \bold{1} \rangle > \frac{\varepsilon}{2M} \right) \leq \frac{\mathbb{E}\left[\langle V_t^{\psi_N}, \bold{1} \rangle \right]}{\frac{\varepsilon}{2M}} < \frac{\varepsilon}{2}.
\end{equation}
Since $\phi_N \in C_{\text{rap}}^+$, we can apply Proposition $\ref{longtime_mass_of_dual}$ to see that there exists $T \equiv T(\varepsilon)$, such that for all $t \geq T$:

\begin{equation}
\label{small_prob_2}
    \mathbb{P}\left(\langle V_t^{\phi_N}, \bold{1} \rangle > \frac{\varepsilon}{2M} \right) < \frac{\varepsilon}{2}.
\end{equation}
By $(\ref{small_prob_1})$, $(\ref{small_prob_2})$ and $(\ref{prob_bound})$ (with $n=N$), we conclude that for all $t \geq T$:

\begin{equation}
\notag
    \mathbb{P}\left(\langle V_t, f \rangle > \varepsilon \right) < \varepsilon,
\end{equation}
and since $\epsilon>0$ was arbitrary, we are done.
\end{proof}
\noindent
Next, we want to use the above result and the self-duality established in Corollary $\ref{cor: duality}$ to determine the longtime behavior of a solution to the parabolic Anderson model with bounded initial condition in $C_{\text{tem}}^+$, but not necessarily integrable. To that aim, we need to formulate a corresponding extended self-duality relation, which is stated in the following lemma.
\begin{lemma}
\label{super_duality_lemma}
Let $\dot{W}$ and $\dot{W_0}$ be two independent white noises on $\mathbb{R}_+ \times \mathbb{R}$. Let $\phi, \psi \in C_{\text{tem}}^+$. Define $(V_t)_{t \geq 0}$ and $(\Tilde{V}_t)_{t \geq 0}$ as the unique $C_{\text{tem}}^+$-valued solutions, of equation $(\ref{eq: dual})$, driven by $\dot{W}$ and $\dot{W_0}$, respectively, with initial conditions $V_0 = \phi$ and $\Tilde{V}_0 = \psi$. Then, for all $t \geq 0$:
\begin{equation}
\notag
    \mathbb{E}\left[e^{-\langle V_t, \psi \rangle}\right] = \mathbb{E}\left[ e^{-\langle \phi, \Tilde{V}_t \rangle} \right].
\end{equation}
\end{lemma}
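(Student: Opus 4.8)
The plan is to deduce the extended self-duality for initial data in $C_{\text{tem}}^+$ from the known self-duality of Corollary \ref{cor: duality}, which requires one of the two initial conditions to lie in the smaller space $C_{\text{rap}}^+$, by a truncation-and-limiting argument. First I would fix $\phi, \psi \in C_{\text{tem}}^+$ and introduce, for $n \in \mathbb{N}$, the truncations $\phi_n(x) \equiv \phi(x)e^{1/(x^2-n^2)}\mathds{1}_{(-n,n)}(x)$ and $\psi_n(x) \equiv \psi(x)e^{1/(x^2-n^2)}\mathds{1}_{(-n,n)}(x)$, exactly as in the proof of Proposition \ref{super_longtime_mass_of_duar}; these are smooth, compactly supported, hence belong to $C_{\text{rap}}^+ \cap L^1(\mathbb{R})$, and they increase monotonically to $\phi$ and $\psi$ respectively as $n \to \infty$. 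Writing $(V_t^{\phi_n})_{t\geq 0}$ and $(\Tilde V_t^{\psi_n})_{t \geq 0}$ for the solutions of \eqref{eq: dual} driven by $\dot W$ and $\dot{W_0}$ with these initial data, Corollary \ref{cor: duality} (applicable since $\phi_n, \psi_n \in C_{\text{rap}}^+ \subseteq C_{\text{tem}}^+$) gives, for every fixed $t \geq 0$,
\begin{equation}
\notag
\mathbb{E}\left[e^{-\langle V_t^{\phi_n}, \psi_n \rangle}\right] = \mathbb{E}\left[e^{-\langle \phi_n, \Tilde V_t^{\psi_n} \rangle}\right].
\end{equation}

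The core of the argument is then to pass to the limit $n \to \infty$ on both sides. By linearity of \eqref{eq: dual} (which is already invoked in the proof of Proposition \ref{super_longtime_mass_of_duar}) one has $V_t^{\phi} = V_t^{\phi_n} + V_t^{\phi - \phi_n}$, and the monotonicity $\phi_n \uparrow \phi$ forces $V_t^{\phi_n} \uparrow V_t^{\phi}$ pointwise a.s.\ (both in $x$ and as measures), since $V_t^{\phi - \phi_n} \geq 0$; likewise $\Tilde V_t^{\psi_n} \uparrow \Tilde V_t^{\psi}$. Combining with $\psi_n \uparrow \psi$ and $\phi_n \uparrow \phi$, the pairings $\langle V_t^{\phi_n}, \psi_n \rangle$ and $\langle \phi_n, \Tilde V_t^{\psi_n}\rangle$ increase monotonically, so by monotone convergence applied inside the (bounded, since the integrand $e^{-(\cdot)} \in (0,1]$) expectations — or simply by bounded convergence — both sides converge:
\begin{equation}
\notag
\mathbb{E}\left[e^{-\langle V_t^{\phi_n}, \psi_n \rangle}\right] \longrightarrow \mathbb{E}\left[e^{-\langle V_t^{\phi}, \psi \rangle}\right], \qquad \mathbb{E}\left[e^{-\langle \phi_n, \Tilde V_t^{\psi_n} \rangle}\right] \longrightarrow \mathbb{E}\left[e^{-\langle \phi, \Tilde V_t^{\psi} \rangle}\right],
\end{equation}
which yields the claimed identity. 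One technical point to verify carefully is that $\langle V_t^{\phi}, \psi\rangle$ is genuinely finite a.s., so that the limit $e^{-\langle V_t^\phi,\psi\rangle}$ is not identically $0$ in a degenerate way; this follows because $V_t^\phi \in C_{\text{tem}}^+$ and $\psi \in C_{\text{tem}}^+$, and the product of a tempered function with a tempered function need not be integrable in general — so I would instead note that the monotone limit $\langle V_t^{\phi_n}, \psi_n\rangle \uparrow \langle V_t^\phi, \psi\rangle$ holds in $[0,\infty]$ regardless, and the convergence of the exponentials to $e^{-\langle V_t^\phi,\psi\rangle}$ (interpreted as $0$ on the event where the limit is $+\infty$) still goes through by monotone/bounded convergence; the right-hand side identity then shows this limiting expectation equals $\mathbb{E}[e^{-\langle\phi,\Tilde V_t^\psi\rangle}]$, which is a symmetric statement and hence consistent.

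The main obstacle I anticipate is precisely this integrability/finiteness bookkeeping for pairings of two $C_{\text{tem}}^+$ functions: unlike the situation in Corollary \ref{cor: duality} or Proposition \ref{super_longtime_mass_of_duar} — where at least one factor is rapidly decreasing or integrable — here both $\langle V_t^\phi, \psi\rangle$ and $\langle \phi, \Tilde V_t^\psi\rangle$ could a priori be infinite. The clean way around this is to phrase the convergence entirely in terms of monotone limits in the extended reals $[0,+\infty]$ and use that $x \mapsto e^{-x}$ is bounded, continuous, and decreasing on $[0,+\infty]$ (with $e^{-\infty}=0$), so that monotone convergence of the exponents transfers to convergence of the expectations without any integrability hypothesis. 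All other steps — existence and pathwise uniqueness of the $C_{\text{tem}}^+$-valued solutions, linearity, the approximation of $C_{\text{tem}}^+$ functions by $C_{\text{rap}}^+$ functions from below — are standard and already used verbatim elsewhere in the paper, so I would state them briefly and refer to the earlier occurrences.
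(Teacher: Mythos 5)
Your proposal is correct and follows essentially the same route as the paper: truncate to $C_{\text{rap}}^+$ via the cutoffs $\phi(x)e^{1/(x^2-n^2)}\mathds{1}_{(-n,n)}(x)$, apply Corollary \ref{cor: duality}, and pass to the limit by monotone convergence using linearity and non-negativity of solutions of \eqref{eq: dual}. The only (harmless) difference is that you truncate both $\phi$ and $\psi$, whereas the paper truncates only $\phi$ and keeps $\psi\in C_{\text{tem}}^+$ fixed, which Corollary \ref{cor: duality} already permits; your care about the pairings possibly being $+\infty$ matches the remark the paper places immediately after the lemma.
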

\begin{proof}
    Let $\phi_n(x) \equiv \phi(x) e^{\frac{1}{x^2-n^2}}\mathds{1}_{(-n,n)}(x)$ and let $(V_t^n)_{t \geq 0}$ be the corresponding unique $C_{\text{tem}}^+$-valued solution to $(\ref{eq: dual})$ with $V_0^n = \phi_n$.  Since $\phi_n \in C_{\text{rap}}^+$, we can use Corollary $\ref{cor: duality}$ to get:
    \begin{equation}
    \label{nth duality}
    \notag
        \mathbb{E}\left[e^{-\langle V_t^n, \psi \rangle}\right] = \mathbb{E}\left[ e^{-\langle  \phi_n, \Tilde{V}_t \rangle} \right].
    \end{equation}
    Now be letting $n \to \infty$, the result follows by monotone convergence of $V^n_t$ to $V_t$ and $\phi_n$ to $\phi$. We leave the details to the reader to complete.

\end{proof}
\begin{remark}
Note that $\langle V_t, \psi \rangle$ and $\langle \phi, \Tilde{V}_t \rangle$ may take infinite values. 
\end{remark}
\noindent
Proposition $\ref{super_longtime_mass_of_duar}$ and Lemma $\ref{super_duality_lemma}$ give us all we need to prove the following result regarding the longtime behavior of the parabolic Anderson model with bounded initial conditions. 

\begin{corollary}
\label{cor: ref I didn't find}
    Let $\dot{W}$ be a white noise on $\mathbb{R}_+ \times \mathbb{R}$.
    Let $f \in C_b^+$ and let $(V_t)_{t \geq 0}$ be the unique $C_{\text{tem}}^+$-valued solution to $(\ref{eq: dual})$, driven by $\dot{W}$, with initial condition $V_0 = f$. Let $g \in C_{\text{tem}}^+ \cap L^1(\mathbb{R})$. Then, $\langle V_t, g \rangle \xrightarrow[t \to \infty]{P} 0$.
\end{corollary}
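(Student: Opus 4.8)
The plan is to reduce the statement about a bounded (possibly non-integrable) initial condition to the integrable case handled in Proposition~\ref{super_longtime_mass_of_duar}, using the extended self-duality of Lemma~\ref{super_duality_lemma} together with a Laplace-transform argument. Concretely, fix $g \in C_{\text{tem}}^+ \cap L^1(\mathbb{R})$ and, for $\lambda > 0$, let $(\Tilde{V}_t)_{t \geq 0}$ be the unique $C_{\text{tem}}^+$-valued solution of $(\ref{eq: dual})$ driven by an independent white noise $\dot{W_0}$ with initial condition $\Tilde{V}_0 = \lambda g$. Since $\lambda g \in C_{\text{tem}}^+ \cap L^1(\mathbb{R})$, Proposition~\ref{super_longtime_mass_of_duar} gives $\langle \Tilde{V}_t, f \rangle \xrightarrow[t \to \infty]{P} 0$ (here $f \in C_b^+$ plays the role of the bounded test function). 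Because $\langle \Tilde{V}_t, f \rangle \geq 0$ and convergence in probability to $0$ of a non-negative sequence implies convergence of the bounded quantities $e^{-\langle \Tilde{V}_t, f \rangle}$, we get $\mathbb{E}[e^{-\langle \Tilde{V}_t, f \rangle}] \to 1$ as $t \to \infty$, for every $\lambda > 0$.

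Next I would apply Lemma~\ref{super_duality_lemma} with $\phi = f$ and $\psi = \lambda g$: this yields
\begin{equation}
\notag
\mathbb{E}\left[ e^{-\langle V_t, \lambda g \rangle} \right] = \mathbb{E}\left[ e^{-\langle f, \Tilde{V}_t \rangle} \right] \xrightarrow[t \to \infty]{} 1,
\end{equation}
where $(V_t)_{t \geq 0}$ is the solution with $V_0 = f$ as in the statement. Thus $\mathbb{E}[e^{-\lambda \langle V_t, g \rangle}] \to 1$ as $t \to \infty$, for each fixed $\lambda > 0$. Since $\langle V_t, g \rangle \geq 0$, the quantity $1 - \mathbb{E}[e^{-\lambda \langle V_t, g \rangle}] = \mathbb{E}[1 - e^{-\lambda \langle V_t, g \rangle}]$ controls $\mathbb{P}(\langle V_t, g \rangle > \varepsilon)$ via the elementary bound $\mathbb{P}(Z > \varepsilon) \leq (1 - e^{-\lambda \varepsilon})^{-1}\, \mathbb{E}[1 - e^{-\lambda Z}]$ for any non-negative random variable $Z$. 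Taking, say, $\lambda = 1/\varepsilon$, we obtain $\mathbb{P}(\langle V_t, g \rangle > \varepsilon) \leq (1 - e^{-1})^{-1}(1 - \mathbb{E}[e^{-\langle V_t, g\rangle/\varepsilon}]) \to 0$ as $t \to \infty$, which is exactly $\langle V_t, g \rangle \xrightarrow[t \to \infty]{P} 0$.

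The only genuinely delicate point is ensuring that Lemma~\ref{super_duality_lemma} applies in the configuration we need — that is, that $\langle V_t, \lambda g \rangle$ and $\langle f, \Tilde{V}_t \rangle$ are meaningful (the remark after that lemma already warns they may be $+\infty$) and that the duality identity is genuinely available with $\phi = f \in C_b^+ \subseteq C_{\text{tem}}^+$ and $\psi = \lambda g \in C_{\text{tem}}^+$; but both initial data lie in $C_{\text{tem}}^+$, so this is precisely the generality in which Lemma~\ref{super_duality_lemma} is stated, and no extra work is needed. A small bookkeeping check is that Proposition~\ref{super_longtime_mass_of_duar} is quoted with the bounded test function being $f \in C_b^+$, which is satisfied here. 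Everything else is a routine Laplace-transform/tightness argument, so I expect no real obstacle.
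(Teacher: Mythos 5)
Your proposal is correct and follows essentially the same route as the paper: an application of Lemma~\ref{super_duality_lemma} with $\phi=f$, $\psi=g$ to transfer the question to the integrable initial condition $g$, followed by Proposition~\ref{super_longtime_mass_of_duar} applied to $\langle \Tilde{V}_t, f\rangle$. The extra parameter $\lambda$ and the explicit bound $\mathbb{P}(Z>\varepsilon)\leq (1-e^{-\lambda\varepsilon})^{-1}\mathbb{E}[1-e^{-\lambda Z}]$ are harmless elaborations of the final step that the paper leaves implicit.
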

\begin{proof}
    Let $\dot{W_0}$ be a white noise on $\mathbb{R}_+ \times \mathbb{R}$, independent of $\dot{W}$. Let $(\Tilde{V}_t)_{t \geq 0}$ be the unique $C_{\text{tem}}^+$-valued solution to $(\ref{eq: dual})$, driven by $\dot{W_0}$, with $\Tilde{V}_0 = g$, independent of $(V_t)_{t \geq 0}$. By Lemma $\ref{super_duality_lemma}$, we get that:

    \begin{equation}
    \label{tadam: Laplace_is_the_same}
        \mathbb{E}\left[e^{-\langle V_t, g \rangle} \right] = \mathbb{E}\left[e^{-\langle f, \Tilde{V}_t \rangle} \right].
    \end{equation}
    By Proposition 3.11, $\langle \Tilde{V}_t, f \rangle \xrightarrow[t \to \infty]{P} 0$, and the result follows from $(\ref{tadam: Laplace_is_the_same})$.
    
    % By replacing $g$ with $ag$ for any $a > 0$, and using the linearity of $(\ref{eq: dual})$, we can see that $(\ref{tadam: Laplace_is_the_same})$ shows the one-sided Laplace transforms of $\langle V_t, g \rangle$ and $\langle \Tilde{V}_t, g \rangle$ are the same. Thus, they have the same distribution. Therefore, since  $\langle \Tilde{V}_t, f \rangle \xrightarrow[t \to \infty]{P} 0$ by Proposition $(\ref{super_longtime_mass_of_duar})$, we also have $\langle V_t, g \rangle \xrightarrow[t \to \infty]{P} 0$, which is what we wanted to show.
\end{proof}
% \begin{proposition}
% \label{conv_prob_lemma}
%     Let $\{X_n\}_{n=1}^{\infty}, \{Y_n\}_{n=1}^{\infty}$ be sequences of random variables defined on the same probability space. Assume that $X_n \xrightarrow[n \to \infty]{P} 0$ and that $\lim_{M \to \infty} \sup_{n \geq 1} \mathbb{P}\left(Y_n > M \right) = 0$. Then, $X_nY_n \xrightarrow[n \to \infty]{P} 0$.
% \end{proposition}
% \begin{proof}
% Let $\varepsilon > 0$. There exists $M > 0$ such that $\sup_{n \geq 1}\mathbb{P}\left(Y_n > M \right) < \frac{\varepsilon}{2}$ and there exists $N_0$ such that for all $n \geq N_0$ we have $\mathbb{P}\left(X_n > \frac{\varepsilon}{M}\right) < \frac{\varepsilon}{2}$. Thus we get for all $n \geq N_0$:
% \begin{equation}
% \begin{aligned}
%     \mathbb{P}\left(X_n Y_n > \varepsilon  \right) &= \mathbb{P}\left(X_n Y_n > \varepsilon, Y_n \leq M   \right) + \mathbb{P}\left(X_n Y_n > \varepsilon, Y_n > M \right) \\ & \leq \mathbb{P}\left(X_n > \frac{\varepsilon}{M} \right) + \mathbb{P}\left(Y_n > M \right) < \varepsilon.
% \end{aligned}
% \end{equation}
% \end{proof}
\noindent
We have gathered all that we needed to prove Theorem $\ref{main_res: global_longtime}$ and Theorem $\ref{main_res: local_longtime}$. Let us start with the proof of Theorem $\ref{main_res: global_longtime}$.
\begin{proof}[Proof of Theorem 2.9]
     Assume without loss of generality that $\langle Y_0, \bold{1} \rangle = \langle \phi - \psi, \bold{1} \rangle \geq 0$. Let $\varepsilon > 0$. Since $Y_t =S_t(Y_0)$, by Corollary $\ref{cor: negative part vanishes}$, there exists $\hat{T}$ such that $\langle Y_{t+\hat{T}}^-, 1 \rangle < \varepsilon$ for all $t \geq 0$. Let $a > 0$ and let $(V_t)_{t \geq 0}$ be the unique $C_{\text{tem}}^+$-valued solution of $(\ref{eq: dual})$ with $V_0 = a\bold{1}$. By Lemma $\ref{lemma: dual_total_mass_mart}$, we get that $\left(-\langle S_{T-t}V_t, \lvert Y_{\hat{T}} \rvert \rangle\right)_{t \in [0,T]}$ is a continuous square integrable martingale with quadratic variation $\int_0^t \langle V_s^2, (S_{T-s}\lvert Y_{\hat{T}} \rvert)^2 \rangle \mathrm{d}s$. Therefore, $\left(e^{-\langle S_{T-t}V_t, \lvert Y_{\hat{T}} \rvert \rangle - \frac{1}{2}\int_0^t \langle V_s^2, (S_{T-s}\lvert Y_{\hat{T}} \rvert)^2 \rangle \mathrm{d}s}\right)_{t \in [0,T]}$ is a local martingale. Note that it is bounded almost surely, hence it is a martingale. It follows that for all $T > 0$ and $t \in [0,T]$:
\begin{equation}
\notag
    \mathbb{E}\left[e^{-\langle S_{T-t}V_t, \lvert Y_{\hat{T}} \rvert \rangle - \frac{1}{2}\int_0^t \langle V_s^2, (S_{T-s}h)^2 \rangle \mathrm{d}s} \right] = \mathbb{E}\left[e^{-\langle S_{T}V_0, \lvert Y_{\hat{T}} \rvert \rangle} \right] =  e^{-a\langle \bold{1}, \lvert Y_{\hat{T}} \rvert \rangle}.
\end{equation}
In particular, for $t = T$ we get:

\begin{equation}
\label{equality that helps}
    \mathbb{E}\left[e^{-\langle V_T, \lvert Y_{\hat{T}} \rvert \rangle - \frac{1}{2}\int_0^T \langle V_s^2, (S_{T-s}\lvert Y_{\hat{T}} \rvert)^2 \rangle \mathrm{d}s} \right] = e^{-a\langle \bold{1}, \lvert Y_{\hat{T}} \rvert \rangle}.
\end{equation}
\noindent
By Lemma $\ref{sol_is_mart}$, $U_{\hat{T}}^i \in L^1(\mathbb{R})$, for $i=1,2$. Therefore, since $X = U_1 + U_2$ and $Y = U_1 - U_2$, we have that $X_{\hat{T}}, \lvert Y_{\hat{T}} \rvert \in L^1(\mathbb{R}).$ Thus, we can apply Corollary $\ref{cor: ref I didn't find}$ to get:

\begin{equation}
\label{limits of ref I didn't find}
    \lim_{T \to \infty}\langle V_T, X_{\hat{T}} \rangle = \lim_{T \to \infty}\langle V_T, \lvert Y_{\hat{T}} \rvert \rangle = 0 \quad \text{ in probability.}
\end{equation}
\noindent
Combine $(\ref{equality that helps})$ and $(\ref{limits of ref I didn't find})$ to see that:

\begin{equation}
\label{importnat_convergence}
    \begin{aligned}
        \lim_{T \to \infty} \mathbb{E}\left[e^{-\langle V_T, X_{\hat{T}} \rangle- \frac{1}{2}\int_0^T \langle V_s^2, (S_{T-s}|Y_{\hat{T}}|)^2 \rangle \mathrm{d}s} \right]  = \mathbb{E}\left[e^{-a\langle \bold{1}, |Y_{\hat{T}}| \rangle} \right].
    \end{aligned}
\end{equation}
\noindent
Note that $(S_{T-s}Y_{\hat{T}})^2 \leq (S_{T-s}|Y_{\hat{T}}|)^2 $ and thus:
\begin{equation}
\label{important_ineqality}
    \mathbb{E}\left[e^{-\langle V_T, X_{\hat{T}} \rangle- \frac{1}{2}\int_0^T \langle V_s^2, (S_{T-s}Y_{\hat{T}})^2 \rangle \mathrm{d}s} \right] \geq \mathbb{E}\left[e^{-\langle V_T, X_{\hat{T}} \rangle- \frac{1}{2}\int_0^T \langle V_s^2, (S_{T-s}|Y_{\hat{T}}|)^2 \rangle \mathrm{d}s} \right].
\end{equation}

\noindent
It is easy to see that since $(X_t)_{t \geq 0}$ satisfies the martingale problem $(MP)_X$, the process $(X_{t + \hat{T}})_{T \geq 0}$ also satisfies the same martingale problem, with initial condition $X_{\hat{T}}$. Hence we can use the previously established duality formula $(\ref{duality_formula})$ to get:

\begin{equation}
\label{application_of_duality}
    \mathbb{E}\left[e^{-\langle V_T, X_{\hat{T}} \rangle- \frac{1}{2}\int_0^T \langle V_s^2, (S_{T-s}Y_{\hat{T}})^2 \rangle \mathrm{d}s} \right] = \mathbb{E}\left[e^{-a\langle \bold{1}, X_{\hat{T}+T} \rangle} \right].
\end{equation}

\noindent
Therefore, using $(\ref{importnat_convergence})$, $(\ref{important_ineqality})$ and $(\ref{application_of_duality})$, we get the lower bound:
\begin{equation}
\label{lower bound}
    \liminf_{T \to \infty}\mathbb{E}\left[e^{-a\langle  \bold{1} , X_{\hat{T}+T}\rangle} \right] \geq \mathbb{E}\left[e^{-a\langle \bold{1}, |Y_{\hat{T}}| \rangle} \right] = \mathbb{E}\left[e^{-a\langle \bold{1}, Y_{\hat{T}} \rangle - 2a \langle \bold{1}, Y_{\hat{T}}^- \rangle} \right] \geq e^{-2a\varepsilon} e^{-a \langle \bold{1}, Y_0  \rangle},
\end{equation}
where the last inequality follows from the choice of $\hat{T}$ and part $(a)$ of Proposition $\ref{heat_longtime_1}$. \\ \\
For an upper bound, by definition of $X$ and $Y$, we have that $X_T \geq Y_T$ for all $T \geq 0$. So for all $T \geq 0$:
\begin{equation}
\label{upper bound}
    \mathbb{E}\left[e^{-a\langle \bold{1}, X_{T} \rangle} \right] \leq \mathbb{E}\left[ e^{-a \langle \bold{1}, Y_T  \rangle} \right] =  e^{-a \langle \bold{1}, Y_0  \rangle},
\end{equation}
where we used again Proposition $\ref{heat_longtime_1}$.
\noindent
Combine the bounds in $(\ref{upper bound})$ and $(\ref{lower bound})$ to see that:

\begin{equation}
\label{double_bound}
    \begin{aligned}
        e^{-a \langle Y_0,  \bold{1}  \rangle} &\geq \limsup_{T \to \infty} \mathbb{E}\left[e^{-a\langle X_{T}, \bold{1} \rangle} \right] \\ &\geq \liminf_{T \to \infty} \mathbb{E}\left[e^{-a\langle X_{T}, \bold{1} \rangle} \right] \\ &= \liminf_{T \to \infty}\mathbb{E}\left[e^{-a\langle X_{\hat{T}+T}, \bold{1} \rangle} \right] \\ &\geq e^{-2a\varepsilon} e^{-a \langle \bold{1}, Y_0  \rangle}.
    \end{aligned}
\end{equation}
\noindent
Taking $\varepsilon \to 0$ in $(\ref{double_bound})$ we have that:

\begin{equation}
    \label{X converges!}
    \lim_{T \to \infty}\mathbb{E}\left[e^{-a\langle X_{T}, \bold{1} \rangle} \right] = e^{-a \langle  Y_0,  \bold{1}  \rangle}.
\end{equation}
\noindent
Since $a > 0$ was arbitrary, $(\ref{X converges!})$ states the convergence of the one-sided Laplace transform of $\langle X_T , \bold{1} \rangle$ to that of $\langle Y_0 , \bold{1} \rangle$. It follows that $\langle X_T, \bold{1} \rangle \xrightarrow[T \to \infty]{\mathcal{D}}  \langle Y_0,  \bold{1} \rangle$, and since the limit is deterministic, the convergence holds in probability as well. By Lemma $\ref{sol_is_mart}$, and since $X = U_1 + U_2$, we get that $\left(\langle X_T, \bold{1} \rangle\right)_{T \geq 0}$ is a non-negative square integrable martingale. Therefore, by the martingale convergence theorem, it converges almost surely. By uniqueness of the limit, we get:

\begin{equation}
\notag
    \lim_{T \to \infty} \langle X_T , \bold{1} \rangle = \langle Y_0 , \bold{1} \rangle \quad \text{a.s.}
\end{equation}
It follows that:
\begin{equation}
\notag
\begin{aligned}
\langle U^2_T, \bold{1} \rangle &= \frac{1}{2}\left(\langle X_T, \bold{1} \rangle - \langle Y_T, \bold{1} \rangle \right) =  \frac{1}{2}\left(\langle X_T, \bold{1} \rangle - \langle Y_0, \bold{1} \rangle \right)  \xrightarrow[T \to \infty]{\text{a.s.}} 0, \\
\langle U^1_T, \bold{1} \rangle &=  \frac{1}{2}\left(\langle X_T, \bold{1} \rangle + \langle Y_T, \bold{1} \rangle \right) =  \frac{1}{2}\left(\langle X_T, \bold{1} \rangle + \langle Y_0, \bold{1} \rangle \right)  \xrightarrow[T \to \infty]{\text{a.s.}}  \langle Y_0, \bold{1} \rangle  = \langle \phi - \psi, \bold{1} \rangle.
\end{aligned}
\end{equation}
Therefore

\begin{equation}
\notag
    \mathbb{P}\left(\lim_{T \to \infty} \langle U^1_T, \bold{1} \rangle \langle U^2_T, \bold{1} \rangle = 0 \right) = 1,
\end{equation}
and we are done.
\end{proof}
\noindent
We proceed to the proof of Lemma $\ref{local_coexistence_is_ok}$, which shows that the definition of local coexistence makes proper sense. We then continue to prove Theorem $\ref{main_res: local_longtime}$, which states local coexistence is not possible when the initial conditions are continuous, bounded and have a well-defined mean.
\begin{proof}[Proof of Lemma 2.7]
 Let $B \in \mathscr{B}_b$, and let $\varphi \in C_{\text{rap}}^+$ be such that $\varphi \geq 1$ on $B$. Thus,
 \begin{equation}
 \notag
 \begin{aligned}
\mathbb{E}\left[\int_{B} U^i_t(x) \mathrm{d}x \right] &\leq \mathbb{E}\left[ \langle X_t, \varphi \rangle \right] = \langle S_T X_0, \varphi \rangle \leq \lVert X_0 \rVert_{\infty} \langle 1, \varphi \rangle < \infty,
\end{aligned}
 \end{equation}
 where we used the martingale problem $(MP)_X$ and Proposition $\ref{uniform_bound_heat}$. By the Markov inequality and Theorem 4.10 in \cite{MR3642325}, we are done.
\end{proof}
\begin{proof}[Proof of Theorem 2.10]
Recall that $Y_0 \in \mathcal{M}$ so that $$\overline{Y}_0 = \lim_{L \to \infty} \frac{1}{2L}\int_{-L}^L Y_0(x) \mathrm{d}x$$ is well-defined. Assume without loss of generality that $\overline{Y}_0 \geq 0$. 
Let $f \in C_{\text{rap}}^{+}$, and let $(V_t)_{t \geq 0}$ be the unique $C_{\text{rap}}^+$-valued solution of $(\ref{eq: dual})$ with initial condition $V_0 = f$. Note that by Lemma $\ref{lemma: dual_total_mass_mart}$, we get that $-\langle V_t, \overline{Y}_0\bold{1} \rangle$ is a continuous square integrable martingale with quadratic variation $\int_0^t \left \langle V_s^2, \left( \overline{Y}_0 \right)^2 \bold{1} \right \rangle \mathrm{d}s$. Therefore, $e^{-\langle V_t, \overline{Y}_0\bold{1} \rangle - \frac{1}{2}\int_0^t \langle V_s^2, \left( \overline{Y}_0 \right)^2 \bold{1} \rangle \mathrm{d}s}$ is a local martingale. Note it is bounded almost surely, hence it is a martingale. It follows that for all $t \geq 0$:

\begin{equation}
\label{eq: exp_mart_property}
    \mathbb{E}\left[e^{-\langle V_t, \overline{Y}_0\bold{1} \rangle - \frac{1}{2}\int_0^t \langle V_s^2, \left( \overline{Y}_0 \right)^2 \bold{1} \rangle \mathrm{d}s}\right] = e^{-\overline{Y}_0\langle f, \bold{1} \rangle}.
\end{equation}
\noindent
By Proposition $\ref{longtime_mass_of_dual}$, we know that $\lim_{t \to \infty}\langle V_t, \overline{Y}_0 \bold{1} \rangle = 0$ in probability, so taking the limit as $t \to \infty$ in Equation $(\ref{eq: exp_mart_property})$ yields:

\begin{equation}
\label{eq: finite_limit_for_dct}
    \mathbb{E}\left[e^{-\frac{1}{2}\int_0^{\infty}\langle V_s^2 , \left( \overline{Y}_0 \right)^2 \bold{1} \rangle \mathrm{d}s} \right] = e^{-\overline{Y}_0\langle f, \bold{1} \rangle}.
\end{equation}
\noindent
Let $(T_n)_{n \geq 1}$ and $(\hat{T}_n)_{n \geq 1}$ be non-negative sequences such that $\lim_{n \to \infty}T_n = \lim_{n \to \infty}\hat{T}_n = \infty.$ Note that for each $n \geq 1$, the process $(X_{\hat{T}_n + t})_{t \geq 0}$ solves the martingale problem $(MP)_X$. Hence, we can use our duality formula from Proposition $\ref{prop: duality}$ to get for all $n \geq 1$:

\begin{equation}
\label{advanced_duality_formula}
    \mathbb{E}\left[e^{-\langle X_{\hat{T}_n+T_n}, f \rangle}\right] = \mathbb{E}\left[e^{-\langle X_{\hat{T}_n}, V_{T_n} \rangle -\frac{1}{2}\int_0^{T_n} \langle V_s^2, Y_{\hat{T}_n+T_n-s}^2 \rangle \mathrm{d}s}\right].
\end{equation}
We wish to take the limit as $n \to \infty$ in $(\ref{advanced_duality_formula})$. First, we treat the convergence of the term $\int_0^{T_n} \langle V_s^2, Y_{\hat{T}_n+T_n-s}^2 \rangle \mathrm{d}s$. By Proposition $\ref{convergence_of_bounded_heat}$, $\lim_{n \to \infty}Y_{\hat{T}_n+T_n-s}^2(x) = \left( \overline{Y}_0 \right)^2$ for all $x \in \mathbb{R}$. Recall that $V_s\in C_{\text{rap}}$, and moreover, by Proposition $\ref{uniform_bound_heat}$,  $V_s^2(\cdot) Y_{\hat{T}_n+T_n-s}^2(\cdot) \leq \lVert Y_0 \rVert_{\infty}^2 V_s^2(\cdot) \in L^1(\mathbb{R})$, almost surely. Thus by dominated convergence we obtain:

\begin{equation}
\label{conv. }
    \lim_{n \to \infty}\langle V_s^2, Y_{\hat{T}_n+T_n-s}^2 \rangle = \langle V_s^2, \left( \overline{Y}_0 \right)^2\bold{1} \rangle, \quad \text{a.s. for each $s \geq 0$}.
\end{equation}
By taking $\phi \equiv \bold{1}$ in the martingale problem $(MP)_V$, we get that $\langle V_t, \bold{1} \rangle$ is a square integrable martingale with quadratic variation $\int_0^t \langle V_s^2, \bold{1} \rangle \mathrm{d}s.$ 
By Proposition $\ref{longtime_mass_of_dual}$, $\langle V_t, \bold{1} \rangle$ converges to $0$ in probability as $t \to \infty$. By the martingale convergence theorem, the convergence holds almost surely as well. By Proposition 1.8 in \cite{MR1725357}, we get that the quadratic variation of $\langle V_t, \bold{1} \rangle$ has to be almost surely finite, that is

\begin{equation}
\label{finite_quad_var}
    \int_0^\infty  \langle V_s^2, \bold{1} \rangle \mathrm{d}s < \infty \quad \text{ almost surely.}
\end{equation}
\noindent
Use again $\lVert Y_s(\cdot) \rVert_{\infty} \leq \lVert Y_0\rVert_{\infty}$, for all $s \geq 0$, to get:
\begin{equation}
\label{bound_L^1}
    \langle V_s^2,  Y_{\hat{T}_n+T_n-s}^2 \rangle \mathds{1}_{s \leq T_n} \leq \lVert Y_0 \rVert_{\infty}^2 \langle V_s^2, \boldsymbol{1} \rangle.
\end{equation}
Thus, $(\ref{finite_quad_var}), (\ref{bound_L^1}), (\ref{conv. })$ and the Dominated Convergence Theorem imply that:
\begin{equation}
\begin{aligned}
\label{convergence_of_iint}
\lim_{n \to \infty} \int_0^{T_n}\langle V_s^2,  Y_{\hat{T}_n+T_n-s}^2 \rangle \mathrm{d}s = \int_0^{\infty} \langle V_s^2, \left( \overline{Y}_0 \right)^2\bold{1} \rangle \mathrm{d}s \quad \text{ almost surely.}
\end{aligned}
\end{equation}
Now we will establish the limit of $\langle X_{\hat{T}_n}, V_{T_n} \rangle$. Note that by conditional Jensen's inequality we get:
    \begin{equation}
    \notag
        \begin{aligned}
            \mathbb{E}\left[\sqrt{\langle X_{\hat{T}_n}, V_{T_n}} \rangle\right] &= \mathbb{E}\left[ \mathbb{E}\left[ \sqrt{\langle X_{\hat{T}_n}, V_{T_n}} \rangle | V_{T_n} \right] \right] \leq \mathbb{E}\left[ \sqrt{\mathbb{E}\left[ \langle X_{\hat{T}_n}, V_{T_n} \rangle | V_{T_n} \right]} \right] \\ &= \mathbb{E}\left[ \sqrt{ \langle S_{\hat{T}_n}X_{0}, V_{T_n} \rangle} \right] \leq \sqrt{\lVert X_0 \rVert_{\infty}} \mathbb{E}\left[ \sqrt{\langle V_{T_n}, \bold{1} \rangle} \right] \xrightarrow[n \to \infty]{} 0,
        \end{aligned}
    \end{equation}
    where the convergence follows from the last display in the proof of Proposition 4.3 in \cite{MR4700256}. Thus, an application of the Markov inequality implies that 
    
    \begin{equation}
    \label{convergence_of_first_term}
        \langle X_{\hat{T}_n}, V_{T_n} \rangle \xrightarrow[n \to \infty]{P} 0.
    \end{equation}
\noindent
$(\ref{convergence_of_iint})$ and $(\ref{convergence_of_first_term})$ imply that:
\begin{equation}
\label{limit_of_RHS_of_duality}
\lim_{n \to \infty} \mathbb{E}\left[e^{-\langle X_{\hat{T}_n}, V_{T_n} \rangle -\frac{1}{2}\int_0^{T_n} \langle V_s^2, Y_{\hat{T}_n+T_n-s}^2 \rangle \mathrm{d}s}\right] = \mathbb{E}\left[e^{-\frac{1}{2}\int_0^{\infty}\langle V_s^2 ,\left( \overline{Y}_0 \right)^2 \bold{1} \rangle \mathrm{d}s} \right].
\end{equation}
Combine $(\ref{eq: finite_limit_for_dct}),(\ref{advanced_duality_formula}) $, and $(\ref{limit_of_RHS_of_duality})$ to get:

\begin{equation}
\notag
\label{MGF_convergence}
    \lim_{n \to \infty} \mathbb{E}\left[e^{-\langle X_{\hat{T}_n+T_n}, f \rangle}\right] = e^{-\overline{Y}_0\langle f, \bold{1} \rangle}.
\end{equation}
Thus, we can easily conclude that $\langle X_t, f \rangle \xrightarrow[t \to \infty]{\mathcal{D}}  \langle \overline{Y}_0 \bold{1}, f \rangle$ for all $f \in C_{\text{rap}}^{+}$. By Proposition $\ref{convergence_of_bounded_heat}$, it follows that $\langle Y_t, f \rangle \xrightarrow[t \to \infty]{\mathcal{D}}  \langle  \overline{Y}_0\bold{1}, f \rangle$ for all $f \in C_{\text{rap}}^{+}$. The limits are deterministic, so the convergences hold also in probability. Therefore:

\begin{equation}
\label{C_rap_convergence}
\begin{aligned}
    \langle U^2_t, f \rangle &= \left \langle \frac{X_t - Y_t}{2}, f \right \rangle \xrightarrow[t \to \infty]{P} 0, \quad \forall f \in C_{\text{rap}}^+. 
\end{aligned}
\end{equation}
\noindent
We want to extend the convergence in $(\ref{C_rap_convergence})$ for functions $g \in C_{\text{exp}}.$ By linearity, it is enough to consider functions in $C_{\text{exp}}^+$. Let $g \in C_{\text{exp}}^+$, and let $f_n(x) \equiv g(x) e^{\frac{1}{x^2-n^2}}\mathds{1}_{(-n,n)}(x)$. Note that $f_n \in C_{\text{rap}}^+$ and that $f_n \uparrow g$. By the martingale problem $(MP)_X$, we get that for each $n$, $\left(\langle S_{T-t}X_t, f_n \rangle \right)_{t \in [0,T]}$ is a martingale, and in particular $\mathbb{E}\left[\langle X_t, f_n \rangle \right] = \langle S_tX_0, f_n \rangle$. Taking the limit as $n \to \infty$ we get that $\mathbb{E}\left[\langle X_t, g \rangle \right] = \langle S_tX_0, g \rangle$ by the Monotone Convergence Theorem. It follows that for all $\varepsilon > 0$:
\begin{equation}
\notag
    \begin{aligned}
        \mathbb{P}\left( \langle U^2_t, g-f_n \rangle > \varepsilon \right) &\leq \frac{\mathbb{E}[\langle U^2_t, g-f_n \rangle ]}{\varepsilon} \\ &\leq \frac{\mathbb{E}[\langle X_t, g-f_n \rangle ]}{\varepsilon} \\ &= \frac{\langle S_t X_0, g-f_n \rangle}{\varepsilon}  \\ & \leq \frac{\lVert X_0 \rVert_{\infty} \langle \bold{1}, g-f_n \rangle}{\varepsilon} \xrightarrow[]{n \to \infty} 0,
    \end{aligned}
\end{equation}
where the last convergence follows by montone convergence. Fix arbitrary $\varepsilon > 0$. Then, there exists $N$ such that $\mathbb{P}\left( \langle U^2_t, g-f_n \rangle > \varepsilon \right) < \varepsilon$ for all $n \geq N$ and $t \geq 0$. By $(\ref{C_rap_convergence})$, there exists $T = T(\varepsilon)$ such that for all $t \geq T$ we have that $\mathbb{P}\left( \langle U^2_t, f_N \rangle > \varepsilon \right) \leq \varepsilon$. We get that for all $t \geq T$:
\begin{equation}
\notag
    \begin{aligned}
        \mathbb{P}\left( \langle U^2_t, g \rangle > 2\varepsilon \right) &\leq
        \mathbb{P}\left( \langle U^2_t, g-f_N \rangle > \varepsilon \right) + \mathbb{P}\left( \langle U^2_t, f_N \rangle > \varepsilon \right) \leq 2\varepsilon
    \end{aligned}
\end{equation}
Since $\varepsilon > 0$ was arbitrary, it follows that $\langle U^2_t, g \rangle \xrightarrow[t \to \infty]{P} 0$. By Proposition $\ref{convergence_of_bounded_heat}$, it follows that $\langle Y_t, g \rangle \xrightarrow[t \to \infty]{P}  \overline{Y}_0\langle  \bold{1}, g \rangle$ and since $U^1_t = Y_t + U^2_t$, we get that:

\begin{equation}
\notag
\label{C_exp_convergence}
\begin{aligned}
    \langle U^1_t, g \rangle \xrightarrow[t \to \infty]{P}  \overline{Y}_0\langle \bold{1}, g \rangle, \quad \forall g \in C_{\text{exp}}, \\
    \langle U^2_t, g \rangle \xrightarrow[t \to \infty]{P} 0, \quad \forall g \in C_{\text{exp}}. 
\end{aligned}
\end{equation}
This completes the proof.
\end{proof}
\section{Acknowledgments}
We thank Zhenyao Sun, Johanna Weinberger, and Yam Bernet for very helpful conversations. The work of the authors was supported in part by ISF Grant
1985/22. 
\bibliographystyle{plain}
\bibliography{sample}

\begin{thebibliography}{10}

\bibitem{MR2814415}
Frank Aurzada and Leif D\"{o}ring.
\newblock Intermittency and ageing for the symbiotic branching model.
\newblock {\em Ann. Inst. Henri Poincar\'{e} Probab. Stat.}, 47(2):376--394, 2011.

\bibitem{MR2778802}
Jochen Blath, Leif D\"{o}ring, and Alison Etheridge.
\newblock On the moments and the interface of the symbiotic branching model.
\newblock {\em Ann. Probab.}, 39(1):252--290, 2011.

\bibitem{MR3474460}
Jochen Blath, Matthias Hammer, and Marcel Ortgiese.
\newblock The scaling limit of the interface of the continuous-space symbiotic branching model.
\newblock {\em Ann. Probab.}, 44(2):807--866, 2016.

\bibitem{MR4331866}
Jochen Blath and Marcel Ortgiese.
\newblock The symbiotic branching model: duality and interfaces.
\newblock In {\em Probabilistic structures in evolution}, EMS Ser. Congr. Rep., pages 311--336. EMS Press, Berlin, [2021] \copyright 2021.

\bibitem{MR4334682}
Le~Chen, Davar Khoshnevisan, David Nualart, and Fei Pu.
\newblock Spatial ergodicity and central limit theorems for parabolic {A}nderson model with delta initial condition.
\newblock {\em J. Funct. Anal.}, 282(2):Paper No. 109290, 35, 2022.

\bibitem{MR3414455}
Xia Chen.
\newblock Precise intermittency for the parabolic {A}nderson equation with an {$(1+1)$}-dimensional time-space white noise.
\newblock {\em Ann. Inst. Henri Poincar\'e{} Probab. Stat.}, 51(4):1486--1499, 2015.

\bibitem{MR1765002}
J.~Theodore Cox, Achim Klenke, and Edwin~A. Perkins.
\newblock Convergence to equilibrium and linear systems duality.
\newblock In {\em Stochastic models ({O}ttawa, {ON}, 1998)}, volume~26 of {\em CMS Conf. Proc.}, pages 41--66. Amer. Math. Soc., Providence, RI, 2000.

\bibitem{MR1944004}
Donald~A. Dawson, Alison~M. Etheridge, Klaus Fleischmann, Leonid Mytnik, Edwin~A. Perkins, and Jie Xiong.
\newblock Mutually catalytic branching in the plane: finite measure states.
\newblock {\em Ann. Probab.}, 30(4):1681--1762, 2002.

\bibitem{MR1921744}
Donald~A. Dawson, Alison~M. Etheridge, Klaus Fleischmann, Leonid Mytnik, Edwin~A. Perkins, and Jie Xiong.
\newblock Mutually catalytic branching in the plane: infinite measure states.
\newblock {\em Electron. J. Probab.}, 7:No. 15, 61, 2002.

\bibitem{MR1959845}
Donald~A. Dawson, Klaus Fleischmann, Leonid Mytnik, Edwin~A. Perkins, and Jie Xiong.
\newblock Mutually catalytic branching in the plane: uniqueness.
\newblock {\em Ann. Inst. H. Poincar\'{e} Probab. Statist.}, 39(1):135--191, 2003.

\bibitem{MR1634416}
Donald~A. Dawson and Edwin~A. Perkins.
\newblock Long-time behavior and coexistence in a mutually catalytic branching model.
\newblock {\em Ann. Probab.}, 26(3):1088--1138, 1998.

\bibitem{MR3111225}
Leif D\"{o}ring and Leonid Mytnik.
\newblock Longtime behavior for mutually catalytic branching with negative correlations.
\newblock In {\em Advances in superprocesses and nonlinear {PDE}s}, volume~38 of {\em Springer Proc. Math. Stat.}, pages 93--111. Springer, New York, 2013.

\bibitem{MR1135228}
M.~Escobedo and E.~Zuazua.
\newblock Self-similar solutions for a convection-diffusion equation with absorption in {${\bf R}^N$}.
\newblock {\em Israel J. Math.}, 74(1):47--64, 1991.

\bibitem{MR2094150}
Alison~M. Etheridge and Klaus Fleischmann.
\newblock Compact interface property for symbiotic branching.
\newblock {\em Stochastic Process. Appl.}, 114(1):127--160, 2004.

\bibitem{MR838085}
Stewart~N. Ethier and Thomas~G. Kurtz.
\newblock {\em Markov processes}.
\newblock Wiley Series in Probability and Mathematical Statistics: Probability and Mathematical Statistics. John Wiley \& Sons, Inc., New York, 1986.
\newblock Characterization and convergence.

\bibitem{MR4801607}
Patric~Karl Gl\"ode and Leonid Mytnik.
\newblock Longtime behavior of the completely positively correlated symbiotic branching model.
\newblock {\em Electron. J. Probab.}, 29:Paper No. 129, 21, 2024.

\bibitem{hammer2015infinite}
Matthias Hammer and Marcel Ortgiese.
\newblock The infinite rate symbiotic branching model: from discrete to continuous space.
\newblock {\em arXiv preprint arXiv:1508.07826}, 2015.

\bibitem{MR3846839}
Matthias Hammer, Marcel Ortgiese, and Florian V\"{o}llering.
\newblock A new look at duality for the symbiotic branching model.
\newblock {\em Ann. Probab.}, 46(5):2800--2862, 2018.

\bibitem{MR3642325}
Olav Kallenberg.
\newblock {\em Random measures, theory and applications}, volume~77 of {\em Probability Theory and Stochastic Modelling}.
\newblock Springer, Cham, 2017.

\bibitem{MR4635721}
Davar Khoshnevisan, Kunwoo Kim, and Carl Mueller.
\newblock Dissipation in parabolic {SPDE}s {II}: {O}scillation and decay of the solution.
\newblock {\em Ann. Inst. Henri Poincar\'e{} Probab. Stat.}, 59(3):1610--1641, 2023.

\bibitem{MR4700256}
Davar Khoshnevisan, Kunwoo Kim, and Carl Mueller.
\newblock On the valleys of the stochastic heat equation.
\newblock {\em Ann. Appl. Probab.}, 34(1B):1177--1198, 2024.

\bibitem{MR3729613}
Davar Khoshnevisan, Kunwoo Kim, and Yimin Xiao.
\newblock Intermittency and multifractality: a case study via parabolic stochastic {PDE}s.
\newblock {\em Ann. Probab.}, 45(6A):3697--3751, 2017.

\bibitem{MR2663642}
Achim Klenke and Leonid Mytnik.
\newblock Infinite rate mutually catalytic branching.
\newblock {\em Ann. Probab.}, 38(4):1690--1716, 2010.

\bibitem{MR4076768}
Achim Klenke and Leonid Mytnik.
\newblock Infinite rate symbiotic branching on the real line: the tired frogs model.
\newblock {\em Ann. Inst. Henri Poincar\'{e} Probab. Stat.}, 56(2):847--883, 2020.

\bibitem{MR2642883}
Achim Klenke and Mario Oeler.
\newblock A {T}rotter-type approach to infinite rate mutually catalytic branching.
\newblock {\em Ann. Probab.}, 38(2):479--497, 2010.

\bibitem{MR1296425}
Carl Mueller and Roger Tribe.
\newblock A phase transition for a stochastic {PDE} related to the contact process.
\newblock {\em Probab. Theory Related Fields}, 100(2):131--156, 1994.

\bibitem{MR1653845}
Leonid Mytnik.
\newblock Uniqueness for a mutually catalytic branching model.
\newblock {\em Probab. Theory Related Fields}, 112(2):245--253, 1998.

\bibitem{MR212424}
V.~D. Repnikov and S.~D. Eidel'man.
\newblock A new proof of the theorem on the stabilization of the solution of the {C}auchy problem for the heat equation.
\newblock {\em Mat. Sb. (N.S.)}, 73(115):155--159, 1967.

\bibitem{MR1725357}
Daniel Revuz and Marc Yor.
\newblock {\em Continuous martingales and {B}rownian motion}, volume 293 of {\em Grundlehren der mathematischen Wissenschaften [Fundamental Principles of Mathematical Sciences]}.
\newblock Springer-Verlag, Berlin, third edition, 1999.

\bibitem{MR1271224}
Tokuzo Shiga.
\newblock Two contrasting properties of solutions for one-dimensional stochastic partial differential equations.
\newblock {\em Canad. J. Math.}, 46(2):415--437, 1994.

\bibitem{MR876085}
John~B. Walsh.
\newblock An introduction to stochastic partial differential equations.
\newblock In {\em \'Ecole d'\'et\'e{} de probabilit\'es de {S}aint-{F}lour, {XIV}---1984}, volume 1180 of {\em Lecture Notes in Math.}, pages 265--439. Springer, Berlin, 1986.

\end{thebibliography}

\end{document}